\newtheorem{thm}{Theorem}[section]
\newtheorem{cor}[thm]{Corollary}
\newtheorem{prop}[thm]{Proposition}
\newtheorem{lem}[thm]{Lemma}
\theoremstyle{definition}
\newtheorem{dfn}[thm]{Definition}
\newtheorem{ex}[thm]{Example}
\newtheorem{fact}[thm]{Fact}
\theoremstyle{remark}
\newtheorem{rem}[thm]{Remark}
\newtheorem{caution}[thm]{Caution}
\newcommand{\Ob}{\mathrm{Ob}}
\newcommand{\ppr}{^{\prime}}
\newcommand{\Mon}{\mathit{Mon}}
\newcommand{\Ab}{\mathit{Ab}}
\newcommand{\Ring}{\mathit{Ring}}
\newcommand{\Sett}{\mathit{Set}}
\newcommand{\TamG}{\mathit{Tam}(G)}
\newcommand{\MackG}{\mathit{Mack}(G)}
\newcommand{\SMackG}{\mathit{SMack}(G)}
\newcommand{\Gs}{{}_G\mathit{set}}
\newcommand{\I}{\mathscr{I}}
\newcommand{\pt}{\mathrm{pt}}
\newcommand{\id}{\mathrm{id}}
\newcommand{\Sl}{\mathscr{S}}
\newcommand{\Ll}{\mathscr{L}}
\newcommand{\Ul}{\mathscr{U}}
\newcommand{\Z}{\mathfrak{Z}}
\newcommand{\MGs}{M\text{-}\Gs}
\numberwithin{equation}{section}
\begin{document}

\title[Fractions of semi-Mackey and Tambara functors]{On the fractions of semi-Mackey and Tambara functors}

\author{Hiroyuki NAKAOKA}
\address{Department of Mathematics and Computer Science, Kagoshima University, 1-21-35 Korimoto, Kagoshima, 890-0065 Japan}

\email{nakaoka@sci.kagoshima-u.ac.jp}

\thanks{The author wishes to thank Professor Serge Bouc and Professor Radu Stancu for the stimulating arguments and their useful comments and advices}
\thanks{The author wishes to thank Professor Fumihito Oda for his suggestions and useful comments}
\thanks{Supported by JSPS Grant-in-Aid for Young Scientists (B) 22740005}

\begin{abstract}
For a finite group $G$, a semi-Mackey (resp. Tambara) functor is regarded as a $G$-bivariant analog of a commutative monoid (resp. ring).
As such, some naive algebraic constructions are generalized to this $G$-bivariant setting.
In this article, as a $G$-bivariant analog of the fraction of a ring, we consider {\it fraction} of a Tambara (and a semi-Mackey) functor, by a multiplicative semi-Mackey subfunctor.

\end{abstract}

\maketitle

\tableofcontents

\section{Introduction and Preliminaries}

For a finite group $G$, a semi-Mackey functor (resp. a Tambara functor) is regarded as a $G$-bivariant analog of a commutative monoid (resp. ring), as seen in \cite{Yoshida}.
As such, some naive algebraic constructions are generalized to this $G$-bivariant setting. For example an analog of ideal theory was considered in \cite{N_IdealTam}, and an analog of monoid-ring construction was considered in \cite{N_TamMack}.

In the ordinary ring theory, {\it fraction} is another well-established construction.
If we are given a multiplicatively closed subset $S$ of a ring $R$, then there associated a ring $S^{-1}R$ and a natural ring homomorphism $\ell_S\colon R\rightarrow S^{-1}R$ satisfying some universality. Similarly for monoids.

As a $G$-bivariant analog of this, we consider {\it fraction} of a Tambara (and a semi-Mackey) functor, by a multiplicative semi-Mackey subfunctor.

\medskip

In this article, a monoid is always assumed to be unitary and commutative. Similarly a ring is assumed to be commutative, with an additive unit $0$ and a multiplicative unit $1$.
We denote the category of monoids by $\Mon$, the category of rings by $\Ring$, and the category of abelian groups by $\Ab$.
A monoid homomorphism preserves units, and a ring homomorphism preserves $0$ and $1$.

We always assume that a multiplicatively closed subset $S\subseteq R$ contains $1$. Thus a multiplicatively closed subset is nothing other than a submonoid of $R^{\mu}$, where $R^{\mu}$ denotes the underlying multiplicative monoid of $R$.
For any submonoid $S$ of a monoid $M$, its {\it saturation} $\widetilde{S}$ is defined by
\[ \widetilde{S}=\{ x\in M\mid ax=s\ \text{for some}\ a\in M, s\in S \}. \]
Then $\widetilde{S}\subseteq M$ is again a submonoid. 
$S$ is called {\it saturated} if it satisfies $S=\widetilde{S}$.

Remark also that if $M$ is a $G$-monoid and $S\subseteq M$ is $G$-invariant, its saturation $\widetilde{S}$ is also $G$-invariant.


\medskip

Throughout this article, we use the same basic notations as in \cite{N_IdealTam}. We fix a finite group $G$, whose unit element is denoted by $e$. Abbreviately we denote the trivial subgroup of $G$ by $e$, instead of $\{ e\}$.
$H\le G$ means $H$ is a subgroup of $G$.
$\Gs$ denotes the category of finite $G$-sets and $G$-equivariant maps.
The order of $H$ is denoted by $|H|$, and the index of $K$ in $H$ is denoted by $|H:K|$, for any $K\le H\le G$.

For any category $\mathscr{C}$ and any pair of objects $X$ and $Y$ in $\mathscr{C}$, the set of morphisms from $X$ to $Y$ in $\mathscr{C}$ is denoted by $\mathscr{C}(X,Y)$. 

\medskip

\section{Fraction of a semi-Mackey functor}

Before constructing a fraction of a Tambara functor, we introduce the fraction of a semi-Mackey functor.
First, we briefly recall the definition of a (semi-)Mackey functor.
Although a (semi-)Mackey functor seems to be recognized well enough, we add this section for the sake of self-containedness and to fix the notations.

\begin{dfn}\label{DefSemiMackFtr}
A {\it semi-Mackey functor} $M$ {\it on} $G$ is a pair $M=(M^{\ast},M_{\ast})$ of a covariant functor
\[ M_{\ast}\colon\Gs\rightarrow\Sett, \]
and a contravariant functor
\[ M^{\ast}\colon\Gs\rightarrow\Sett \]
which satisfies the following. Here $\Sett$ denotes the category of sets.
\begin{enumerate}
\item For each object $X\in\Ob(\Gs)$, we have $M_{\ast}(X)=M^{\ast}(X)$. We denote this simply by $M(X)$.
\item For any pair $X,Y\in\Ob(\Gs)$, if we denote the inclusions into $X\amalg Y$ by $\iota_X\colon X\hookrightarrow X\amalg Y$ and $\iota_Y\colon Y\hookrightarrow X\amalg Y$, then
\[ (M^{\ast}(\iota_X),M^{\ast}(\iota_Y))\colon M(X\amalg Y)\rightarrow M(X)\times M(Y) \]
becomes an isomorphism.
\item (Mackey condition)
If we are given a pullback diagram
\[
\xy
(-7,6)*+{X\ppr}="0";
(7,6)*+{X}="2";
(-7,-6)*+{Y\ppr}="4";
(7,-6)*+{Y}="6";
(0,0)*+{\square}="8";
{\ar^{\xi} "0";"2"};
{\ar_{f\ppr} "0";"4"};
{\ar^{f} "2";"6"};
{\ar_{\eta} "4";"6"};
\endxy
\]
in $\Gs$, then
\[
\xy
(-12,7)*+{M(X\ppr)}="0";
(12,7)*+{M(X)}="2";
(-12,-7)*+{M(Y\ppr)}="4";
(12,-7)*+{M(Y)}="6";
{\ar^{M_{\ast}(\xi)} "0";"2"};
{\ar^{M^{\ast}(f\ppr)} "4";"0"};
{\ar_{M^{\ast}(f)} "6";"2"};
{\ar_{M_{\ast}(\eta)} "4";"6"};
{\ar@{}|\circlearrowright "0";"6"};
\endxy
\]
is commutative.
\end{enumerate}

If $M$ is a semi-Mackey functor, then $M(X)$ becomes a monoid for each $X\in\Ob(\Gs)$, and $M^{\ast}$, $M_{\ast}$ become monoid-valued functors $\Gs\rightarrow\Mon$.
Those $M^{\ast}(f), M_{\ast}(f)$ for morphisms $f$ in $\Gs$ are called {\it structure morphisms} of $M$.
$M^{\ast}(f),M_{\ast}(f)$ are often abbreviated to $f^{\ast},f_{\ast}$.

A {\it morphism} of semi-Mackey functors $\vartheta\colon M\rightarrow N$ is a family of monoid homomorphisms
\[  \vartheta=\{\vartheta_X\colon M(X)\rightarrow N(X) \}_{X\in\Ob(\Gs)}, \]
natural with respect to the contravariant and the covariant parts. We denote the category of semi-Mackey functors by $\SMackG$.

If $M$ is a semi-Mackey functor on $G$, a {\it semi-Mackey subfunctor} $\Sl\subseteq M$ is a family of submonoids $\{ \Sl(X)\subseteq M(X) \}_{X\in\Ob(\Gs)}$, satisfying
\[ f^{\ast}(\Sl(Y))\subseteq\Sl(X),\quad f_{\ast}(\Sl(X))\subseteq\Sl(Y) \]
for any $f\in\Gs(X,Y)$. Then $\Sl$ itself becomes a semi-Mackey functor, and this is nothing other than a subobject in $\SMackG$.

A semi-Mackey functor $M$ on $G$ is called a {\it Mackey functor} if it satisfies $M(X)\in\Ob(\Ab)$ for any $X\in\Ob(\Gs)$. The full subcategory of Mackey functors is denoted by $\MackG\subseteq\SMackG$. For the properties of Mackey functors, see for example \cite{Bouc}.
\end{dfn}

Trivial example is the following.
\begin{ex}\label{ExTrivLoc}
Let $M$ be a semi-Mackey functor on $G$. If we define $M^{\times}\subseteq M$ by
\[ M^{\times}(X)=(M(X))^{\times}=\{ \text{invertible elements in}\ M(X) \} \]
for each $X\in\Ob(\Gs)$, then $M^{\times}\subseteq M$ becomes a semi-Mackey subfunctor.
\end{ex}

\begin{prop}\label{PropSemiMackLoc}
Let $\Sl\subseteq M$ be a semi-Mackey subfunctor.
\begin{enumerate}
\item $\Sl^{-1}M=\{ \Sl(X)^{-1}M(X) \}_{X\in\Ob(\Gs)}$ has a structure of a semi-Mackey functor induced from that on $M$. Here, $\Sl(X)^{-1}M(X)$ denotes the ordinary fraction of monoids.
\item The natural monoid homomorphisms
\[ \ell_{\Sl,X}\colon M(X)\rightarrow\Sl^{-1}M(X)\ ;\ x\mapsto \frac{x}{1}\quad({}^{\forall}X\in\Ob(\Gs)) \]
form a morphism of semi-Mackey functors $\ell_{\Sl}\colon M\rightarrow\Sl^{-1}M$.
\item For any semi-Mackey functor $M\ppr$, the above $\ell_{\Sl}$ gives a bijection between the morphisms $\Sl^{-1}M\rightarrow M\ppr$ and the morphisms $\vartheta\colon M\rightarrow M\ppr$ satisfying $\vartheta(\Sl)\subseteq M^{\prime\times}$ $:$
\begin{eqnarray*}
\{ \vartheta\in\SMackG(M,M\ppr)\mid \vartheta(\Sl)\subseteq M^{\prime\times}\}&\overset{\cong}{\longrightarrow}&\SMackG(\Sl^{-1}M,M\ppr)\\
\vartheta&\mapsto&\vartheta\circ\ell_{\Sl}
\end{eqnarray*}
\end{enumerate}
\end{prop}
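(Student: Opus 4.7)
The plan is to build the semi-Mackey structure on $\Sl^{-1}M$ by applying the ordinary monoid-fraction construction pointwise to the family $\{\Sl(X)\subseteq M(X)\}_{X\in\Ob(\Gs)}$, and then to read off (2) and (3) from the pointwise universal property in $\Mon$.

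For (1), for each $f\in\Gs(X,Y)$ I define $f^{\ast}\colon\Sl^{-1}M(Y)\to\Sl^{-1}M(X)$ by $y/t\mapsto f^{\ast}(y)/f^{\ast}(t)$ and $f_{\ast}\colon\Sl^{-1}M(X)\to\Sl^{-1}M(Y)$ by $x/s\mapsto f_{\ast}(x)/f_{\ast}(s)$. Denominators remain in the correct multiplicative set because $\Sl\subseteq M$ is a semi-Mackey subfunctor, and well-definedness with respect to the fraction equivalence relation is immediate from the fact that $f^{\ast}$ and $f_{\ast}$ are monoid homomorphisms carrying $\Sl$ into $\Sl$. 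Functoriality in both variances is inherited from $M$. The disjoint-union axiom follows by combining the restriction $(\iota_X^{\ast},\iota_Y^{\ast})\colon\Sl(X\amalg Y)\cong\Sl(X)\times\Sl(Y)$ with the fact that fraction in $\Mon$ commutes with finite products of (monoid, submonoid) pairs. For the Mackey square, applying the Mackey condition of $M$ separately to the numerator and the denominator of $y\ppr/s\ppr\in\Sl^{-1}M(Y\ppr)$ gives $f^{\ast}(\eta_{\ast}(y\ppr/s\ppr))=f^{\ast}\eta_{\ast}(y\ppr)/f^{\ast}\eta_{\ast}(s\ppr)=\xi_{\ast}(f\ppr)^{\ast}(y\ppr)/\xi_{\ast}(f\ppr)^{\ast}(s\ppr)=\xi_{\ast}((f\ppr)^{\ast}(y\ppr/s\ppr))$, as required.

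Part (2) is then immediate: $f^{\ast}(x/1)=f^{\ast}(x)/1$ and $f_{\ast}(x/1)=f_{\ast}(x)/f_{\ast}(1)=f_{\ast}(x)/1$, since $f_{\ast}$ preserves the monoid unit. For (3), given $\vartheta\colon M\to M\ppr$ with $\vartheta_X(\Sl(X))\subseteq M\ppr(X)^{\times}$ for every $X$, the classical universal property of fraction in $\Mon$ supplies at each $X$ a unique monoid homomorphism $\widetilde{\vartheta}_X\colon\Sl^{-1}M(X)\to M\ppr(X)$ with $\widetilde{\vartheta}_X\circ\ell_{\Sl,X}=\vartheta_X$, namely $x/s\mapsto\vartheta_X(x)\vartheta_X(s)^{-1}$. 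Naturality of the family $\{\widetilde{\vartheta}_X\}$ in both variances is forced by uniqueness: both $f^{\ast}\circ\widetilde{\vartheta}_Y$ and $\widetilde{\vartheta}_X\circ f^{\ast}$ factor $f^{\ast}\circ\vartheta_Y=\vartheta_X\circ f^{\ast}$ through $\ell_{\Sl,Y}$, and similarly for $f_{\ast}$. Conversely, any morphism $\Sl^{-1}M\to M\ppr$ sends $\Sl$ into $M^{\prime\times}$ after restriction along $\ell_{\Sl}$, because each $s/1$ is already invertible in $\Sl^{-1}M(X)$ with inverse $1/s$; mutual inversion of the two assignments follows from the pointwise uniqueness.

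There is no genuine obstacle: all three parts reduce, $X$ by $X$, to the ordinary monoid-fraction statement, and the semi-Mackey structure transports for free because every structure morphism of $M$ is a monoid homomorphism respecting $\Sl$. The only bookkeeping that demands any care is checking that the Mackey square still commutes after passing to fractions, which reduces to the same square for $M$ applied separately to numerator and denominator.
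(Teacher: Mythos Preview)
Your argument is correct and follows essentially the same approach as the paper: define $f^{\ast}$ and $f_{\ast}$ on fractions by applying the structure morphisms of $M$ separately to numerator and denominator (using that $\Sl$ is closed under both), and then reduce the remaining axioms and the universal property to the pointwise monoid-fraction statement. In fact your write-up is more explicit than the paper's, which merely records the formulas for $f^{\ast}$ and $f_{\ast}$, declares the semi-Mackey axioms ``obvious'', and defers the details of the universal property to the later Tambara-functor discussion.
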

\begin{proof}
By the universality of the fraction of monoids, for any $f\in\Gs(X,Y)$, there exists a unique monoid homomorphism
\[ f^{\ast}\colon\ \Sl^{-1}M(Y)\rightarrow\Sl^{-1}M(X) \]
compatible with $f^{\ast}$ for $M$, given by
\[ f^{\ast}(\frac{y}{t})=\frac{f^{\ast}(y)}{f^{\ast}(t)}\quad\ ({}^{\forall}\,\frac{y}{t}\in\Sl^{-1}M(Y)).  \]
Similarly $f_{\ast}$ for $\Sl^{-1}M$ is obtained uniquely by
\[ f_{\ast}(\frac{x}{s})=\frac{f_{\ast}(x)}{f_{\ast}(s)}\quad\ ({}^{\forall}\,\frac{x}{s}\in\Sl^{-1}M(X)),  \]
compatibly with $f_{\ast}$ for $M$.
Obviously $\Sl^{-1}M$ becomes a semi-Mackey functor, with these structure morphisms.

The rest also immediately follows from the properties of ordinary fraction of monoids. Since we discuss this again for Tambara functors in Proposition \ref{PropLocUniv}, we omit the detail here. We remark that analogs of Corollary \ref{CorLocLoc} and Corollary \ref{CorLLoc} also hold, which will be left to the reader.
\end{proof}

In particular, we can take the fraction $M^{-1}M$ of a semi-Mackey functor $M$ by itself. This can be understood in a more functorial way as follows. 
\begin{rem}\label{RemFtr}
If $F\colon \Mon\rightarrow\Ab$ is a functor preserving products, then from any semi-Mackey functor $M$, we obtain a Mackey functor
\[ F(M)=\{ F(M(X))\}_{X\in\Ob(\Gs)}. \]
This gives a functor, which we also abbreviate to $F$
\[ F\colon\SMackG\rightarrow\MackG. \]
$($Similarly for functors $\Mon\rightarrow\Mon$, $\Ab\rightarrow\Ab$, and $\Ab\rightarrow\Mon$.$)$
\end{rem}

\begin{ex}\label{ExFtr}
The group-completion functor
\[ K_0\colon\Mon\rightarrow\Ab \]
and the functor taking the group of invertible elements
\[ (\ )^{\times}\colon\Mon\rightarrow\Ab \]
yield functors
\begin{eqnarray*}
&K_0\,\colon\,\SMackG\rightarrow\MackG,&\\
&(\ )^{\times}\,\colon\,\SMackG\rightarrow\MackG.&
\end{eqnarray*}

Moreover the adjoint properties of the original functors are enhanced to this Mackey-functorial level. In fact, it can be easily shown that $K_0$ is left adjoint to the inclusion functor $\MackG\hookrightarrow\SMackG$, and $(\ )^{\times}$ is right adjoint to the same functor.

Thus for any pair of semi-Mackey functors $M$ and $M\ppr$, we have a natural isomorphism
\begin{eqnarray*}
\SMackG(K_0(M),M\ppr)&\cong&\MackG(K_0(M),M^{\prime\times})\\
&\cong&\SMackG(M,M^{\prime\times})\\
&=&\{ \vartheta\in\SMackG(M,M\ppr)\mid \vartheta(M)\subseteq M^{\prime\times}\} ,\\
\end{eqnarray*}
which re-creates the adjoint isomorphism in Proposition \ref{PropSemiMackLoc}, in the case of $\Sl=M$.
\end{ex}

\medskip

\begin{dfn}\label{DefSat}
For any semi-Mackey subfunctor $\Sl\subseteq M$, we define its {\it saturation} $\widetilde{\Sl}$ by
\[ \widetilde{\Sl}(X)=(\Sl(X))^{\sim}. \]

$\widetilde{\Sl}\subseteq M$ becomes again a semi-Mackey subfunctor. We say $\Sl$ is {\it saturated} if it satisfies $\Sl=\widetilde{\Sl}$.
\end{dfn}

\begin{rem}\label{RemTrivSLoc}
Let $M$ be a semi-Mackey functor on $G$.
\begin{enumerate}
\item If a semi-Mackey subfunctor $\Sl\subseteq M$ satisfies $\Sl\subseteq M^{\times}$, then $\ell_{\Sl}$ becomes an isomorphism. In particular if $\Sl$ belongs to $\MackG$, then we have $\Sl\subseteq M^{\times}$ and thus $\ell_{\Sl}$ is an isomorphism.

\item For any semi-Mackey subfunctor $\Sl\subseteq M$, we have a natural isomorphism $\Sl^{-1}M\overset{\cong}{\rightarrow}\widetilde{\Sl}^{-1}M$ compatible with $\ell_{\Sl}$ and $\ell_{\widetilde{\Sl}}$.
\end{enumerate}
\end{rem}
\begin{proof}
These can be confirmed on each object $X\in\Ob(\Gs)$. See also Remark \ref{RemTrivLoc}.
\end{proof}

\section{Semi-Mackey subfunctors generated by $S\subseteq M(G/e)$}

In this section, we state the construction of semi-Mackey subfunctors $\Sl\subseteq M$ from a saturated $G$-invariant submonoid $S\subseteq M(G/e)$.

The following proposition is also used critically in the next section.

\begin{prop}\label{PropLoc}
Let $\Sl\subseteq M$ be a semi-Mackey subfunctor. Then, for any $f\in\Gs(X,Y)$, 
\[ \Sl(X)\subseteq (f^{\ast}\Sl(Y))^{\sim} \]
is satisfied. Namely, for any $s\in\Sl(X)$, there exist some $a\in M(X)$ and $\bar{s}\in\Sl(Y)$ satisfying $f^{\ast}(\bar{s})=as$.
Indeed, $\bar{s}$ can be chosen as $\ \bar{s}=f_{\ast}(s)$.
\end{prop}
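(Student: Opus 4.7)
The approach is to apply the Mackey condition to the pullback of $f\colon X\to Y$ against itself. The crucial geometric input is that in $\Gs$ the diagonal $\Delta\colon X\to X\times_Y X$ is always an inclusion of a summand: its image is an injective $G$-subset, hence a union of $G$-orbits, and therefore complemented. This yields an isomorphism $X\times_Y X\cong X\amalg W$ in $\Gs$, under which $\Delta$ becomes the inclusion of the first summand and both projections $\pi_1,\pi_2\colon X\times_Y X\to X$ restrict to $\id_X$ on $X$.

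With this decomposition in hand, the Mackey condition yields $f^\ast f_\ast(s)=\pi_{2\ast}\pi_1^\ast(s)$ in $M(X)$. Under the product decomposition $M(X\amalg W)\cong M(X)\times M(W)$ from Definition \ref{DefSemiMackFtr}(2), the element $\pi_1^\ast(s)$ has summand-components $s$ on the $X$-factor (since $\pi_1\circ\Delta=\id_X$) and $(\pi_1|_W)^\ast s$ on the $W$-factor. Pushing forward along $\pi_2$ and using that the monoid operation on $M(X)$ is the one induced by the biproduct decomposition together with the fold map $\nabla\colon X\amalg X\to X$, the diagonal summand contributes $s$ itself while the complementary summand contributes $a:=(\pi_2|_W)_\ast(\pi_1|_W)^\ast(s)\in M(X)$, giving $f^\ast f_\ast(s)=s\cdot a$.

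Setting $\bar s:=f_\ast(s)$, which lies in $\Sl(Y)$ because $\Sl$ is closed under covariant transfers, the above identity reads $f^\ast(\bar s)=as$, which is exactly the condition that $s\in(f^\ast\Sl(Y))^{\sim}$. The main obstacle will be the bookkeeping in the middle step: one must verify carefully that the monoid structure on $M(X)$ induced by $\nabla$ and the biproduct really does combine the two summand-contributions multiplicatively as claimed. This reduces to the familiar identifications $M_\ast(\iota_X)(\sigma)=(\sigma,1)$ and $M_\ast(\iota_W)(\tau)=(1,\tau)$ under $M(X\amalg W)\cong M(X)\times M(W)$, after which the claim is a direct unwinding of definitions.
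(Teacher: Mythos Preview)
Your proposal is correct and follows essentially the same route as the paper: form the self-pullback $X\times_Y X$, split off the diagonal to write it as $X\amalg W$, apply the Mackey condition, and read off $f^{\ast}f_{\ast}(s)=s\cdot a$ with $a=(\pi_2|_W)_{\ast}(\pi_1|_W)^{\ast}(s)$. The paper does exactly this (with $Z$ in place of your $W$ and $q_i=p_i|_Z$), only it is slightly terser about the multiplicative splitting, simply writing $(\id_X\cup q_1)_{\ast}(\id_X\cup q_2)^{\ast}(s)=s\cdot(q_{1\ast}q_2^{\ast}(s))$ without spelling out the $M(X\amalg W)\cong M(X)\times M(W)$ bookkeeping you describe.
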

\begin{proof}
Let
\[
\xy
(-10,6)*+{X\times_YX}="0";
(10,6)*+{X}="2";
(-10,-6)*+{X}="4";
(10,-6)*+{Y}="6";
(0,0)*+{\square}="7";
{\ar^<<<<<{p_2} "0";"2"};
{\ar_{p_1} "0";"4"};
{\ar^{f} "2";"6"};
{\ar_{f} "4";"6"};
\endxy
\]
be a pullback diagram, and let $\Delta\colon X\rightarrow X\underset{Y}{\times}X$ be the diagonal map. If we put
\begin{eqnarray*}
&Z=X\underset{Y}{\times}X\setminus\Delta(X),&\\
&q_1=p_1|_Z,\quad q_2=p_2|_Z,&
\end{eqnarray*}
then
\[
\xy
(-11,6)*+{X\amalg Z}="0";
(11,6)*+{X}="2";
(-11,-6)*+{X}="4";
(11,-6)*+{Y}="6";
(0,0)*+{\square}="7";
{\ar^<<<<<<<{\id_X\cup q_2} "0";"2"};
{\ar_{\id_X\cup q_1} "0";"4"};
{\ar^{f} "2";"6"};
{\ar_{f} "4";"6"};
\endxy
\]
also becomes a pullback diagram. Thus by Mackey condition, we obtain
\[ f^{\ast}f_{\ast}(s)=(\id_X\cup q_1)_{\ast}(\id_X\cup q_2)^{\ast}(s)=s\cdot(q_{1\ast}q_2^{\ast}(s)) \]
for any $s\in M(X)$.

In particular when $s\in\Sl(X)$, if we put $a=q_{1\ast}q_2^{\ast}(s)$ and $\bar{s}=f_{\ast}(s)$, then it follows $f^{\ast}(\bar{s})=as$ and $\bar{s}\in\Sl(Y)$.
\end{proof}

\begin{cor}\label{CorSat}
If $\Sl\subseteq M$ is a saturated semi-Mackey subfunctor, then for any $X\in\Ob(\Gs)$, we have
\[ \Sl(X)=(\pt_X^{\ast}(\Sl(G/G)))^{\sim}, \]
where $\pt_X\colon X\rightarrow G/G$ is the constant map.
Thus $\Sl$ is determined by $\Sl(G/G)$.
\end{cor}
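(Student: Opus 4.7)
The proof will be a short two-step inclusion argument, exploiting Proposition \ref{PropLoc} in one direction and the saturation hypothesis in the other.

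For the inclusion $(\pt_X^{\ast}(\Sl(G/G)))^{\sim} \subseteq \Sl(X)$, I would first note that since $\Sl$ is a semi-Mackey subfunctor, contravariant structure morphisms preserve $\Sl$, giving $\pt_X^{\ast}(\Sl(G/G)) \subseteq \Sl(X)$. Saturation is monotone, so $(\pt_X^{\ast}(\Sl(G/G)))^{\sim} \subseteq \widetilde{\Sl(X)} = \Sl(X)$, using $\Sl = \widetilde{\Sl}$.

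For the reverse inclusion $\Sl(X) \subseteq (\pt_X^{\ast}(\Sl(G/G)))^{\sim}$, apply Proposition \ref{PropLoc} to the constant map $f = \pt_X \colon X \to G/G$. Given $s \in \Sl(X)$, the proposition supplies $a \in M(X)$ and $\bar{s} = \pt_{X\ast}(s) \in \Sl(G/G)$ with $\pt_X^{\ast}(\bar{s}) = as$. Thus $as \in \pt_X^{\ast}(\Sl(G/G))$, which by definition of saturation means $s \in (\pt_X^{\ast}(\Sl(G/G)))^{\sim}$.

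Combining the two inclusions yields the desired equality. The final sentence, that $\Sl$ is determined by $\Sl(G/G)$, is then an immediate consequence: the displayed formula reconstructs $\Sl(X)$ from $\Sl(G/G)$ using only the contravariant structure of $M$ and the monoid operation on $M(X)$.

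I do not anticipate a serious obstacle: the essential work has already been done in Proposition \ref{PropLoc}, and the corollary is really just a specialization of that result to $f = \pt_X$ together with the tautological use of the saturation hypothesis. The only minor point to double-check is that the saturation operation $(\,\cdot\,)^{\sim}$ on submonoids of $M(X)$ is monotone in its argument, which is immediate from the definition.
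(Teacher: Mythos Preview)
Your proposal is correct and follows essentially the same approach as the paper: the paper's proof is the one-line observation that $\pt_X^{\ast}(\Sl(G/G))\subseteq\Sl(X)\subseteq(\pt_X^{\ast}(\Sl(G/G)))^{\sim}$ together with $\Sl(X)=(\Sl(X))^{\sim}$, which is exactly the two-inclusion argument you spell out, invoking Proposition~\ref{PropLoc} for the nontrivial direction and monotonicity of saturation for the other.
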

\begin{proof}
This immediately follows from $\pt_X^{\ast}(\Sl(G/G))\subseteq\Sl(X)\subseteq(\pt_X^{\ast}(\Sl(G/G)))^{\sim}$ and $\Sl(X)=(\Sl(X))^{\sim}$.
\end{proof}

\begin{rem}\label{Rem3Cond}
Let $M$ be a semi-Mackey functor on $G$.
To give a semi-Mackey subfunctor $\Sl\subseteq M$ is equivalent to give a submonoid $\Sl(X)\subseteq M(X)$ for each transitive $X\in\Ob(\Gs)$, in such a way that
\begin{enumerate}
\item[{\rm (i)}] $f_{\ast}(\Sl(X))\subseteq\Sl(Y)$
\item[{\rm (ii)}] $f^{\ast}(\Sl(Y))\subseteq\Sl(X)$
\end{enumerate}
are satisfied for any $f\in\Gs(X,Y)$ between transitive $X,Y\in\Ob(\Gs)$. 

In fact, if we define $\Sl(X)$ for any (not necessarily transitive) $X\in\Ob(\Gs)$ by
\[ \Sl(X)=\{ (s_1,\ldots,s_n)\in M(X)\mid s_i\in\Sl(X_i)\ \ (1\le {}^{\forall}i\le n) \} \]
using the orbit decomposition $X=X_1\amalg \cdots \amalg X_n$, then $\Sl\subseteq M$ becomes a semi-Mackey subfunctor.
\end{rem}

Starting from a $G$-invariant submonoid $S\subseteq M(G/e)$, we can construct semi-Mackey subfunctors of $M$ in the following ways.
\begin{prop}\label{PropMinLoc}
Let $S\subseteq M(G/e)$ be a saturated $G$-invariant submonoid. For each transitive $X\in\Ob(\Gs)$, define $\Ll_S(X)$ by
\[ \Ll_S(X)=\gamma_{X\ast}(S) \]
for some $\gamma_X\in\Gs(G/e,X)$. Then $\Ll_S\subseteq M$ becomes a semi-Mackey subfunctor.

Obviously we have $\Ll_S(G/e)=S$, and $\Ll_S$ is the minimum one among the semi-Mackey subfunctors $\Sl$ satisfying $\Sl(G/e)\supseteq S$.
\end{prop}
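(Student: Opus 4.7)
The plan is to verify three things in sequence: well-definedness of $\Ll_S(X)$ (independence of the choice of $\gamma_X$), the two semi-Mackey subfunctor conditions of Remark \ref{Rem3Cond}, and the minimality statement. For well-definedness, note that for any transitive $X$, two elements of $\Gs(G/e, X)$ differ by precomposition with an automorphism of $G/e$, i.e.\ by some $\sigma_g \colon G/e \to G/e$. Since $\sigma_{g\ast}$ realizes the $G$-action on $M(G/e)$ and $S$ is $G$-invariant, we have $\sigma_{g\ast}(S) = S$, and hence $\gamma_{X\ast}(S)$ is independent of the choice of $\gamma_X$. That $\Ll_S(X)$ is a submonoid is automatic since $\gamma_{X\ast}$ is a monoid homomorphism.

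By Remark \ref{Rem3Cond}, it then suffices to verify (i) and (ii) for $f \in \Gs(X, Y)$ with both $X, Y$ transitive. Condition (i) is immediate: $f \circ \gamma_X \in \Gs(G/e, Y)$ is a valid choice of $\gamma_Y$, so $f_\ast(\Ll_S(X)) = (f \circ \gamma_X)_\ast(S) = \Ll_S(Y)$ by well-definedness. For condition (ii), I would apply the Mackey condition to the pullback square formed by $f \colon X \to Y$ and $\gamma_Y \colon G/e \to Y$, producing projections $p_1 \colon P \to X$ and $p_2 \colon P \to G/e$ with $f^{\ast} \circ \gamma_{Y\ast} = p_{1\ast} \circ p_2^{\ast}$. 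The crucial observation is that the stabilizer of any point of $P$ is contained in the stabilizer of its image in $G/e$, which is trivial; hence every orbit $P_i$ of $P$ is isomorphic to $G/e$, and each $p_2|_{P_i}$ is an automorphism of $G/e$. Decomposing orbit-wise, $p_{1\ast} p_2^{\ast}(s)$ becomes a sum over $i$ of terms $(p_1|_{P_i})_\ast(s_i)$ where each $s_i \in S$ by $G$-invariance; since each $p_1|_{P_i} \colon G/e \to X$ is a valid choice of $\gamma_X$, each summand lies in $\Ll_S(X)$, and so does the sum (as $\Ll_S(X)$ is a submonoid).

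Minimality is then routine: for any semi-Mackey subfunctor $\Sl \subseteq M$ with $\Sl(G/e) \supseteq S$ and any transitive $X$, the covariance of $\Sl$ yields $\Ll_S(X) = \gamma_{X\ast}(S) \subseteq \gamma_{X\ast}(\Sl(G/e)) \subseteq \Sl(X)$. The main obstacle is the verification of condition (ii); the key insight is that pulling back along $\gamma_Y$, whose source is the free orbit $G/e$, forces the pullback to be a disjoint union of copies of $G/e$, which reduces the Mackey identity to an orbit-by-orbit calculation controlled entirely by the $G$-invariance of $S$. Note that saturation of $S$ is not needed in any of these steps.
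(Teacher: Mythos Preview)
Your argument is correct and follows essentially the same route as the paper's proof: independence of $\gamma_X$ from $G$-invariance, condition (i) from functoriality of $(\,)_\ast$, and condition (ii) from the Mackey decomposition of the pullback $X\times_Y G/e$ into copies of $G/e$. One small terminological point: since $M$ is monoid-valued (and the paper writes the operation multiplicatively), your ``sum over $i$'' should read as the monoid product $\prod_i(p_1|_{P_i})_\ast(s_i)$; with that adjustment your orbit-wise computation matches the paper's $\prod_i\gamma_{i\ast}(s)$ exactly (the paper chooses the identification $P_i\cong G/e$ so that $p_2|_{P_i}=\id$, whence each $s_i=s$). Your closing remark that saturation of $S$ is nowhere used is accurate and worth noting.
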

\begin{proof}
First remark that the definition of $\Ll_S(X)$ does not depend on the choice of $\gamma_X$, since $S$ is $G$-invariant. We show the conditions in Remark \ref{Rem3Cond} are satisfied.

Let $f\in\Gs(X,Y)$ be any morphism between transitive $X,Y\in\Ob(\Gs)$. Obviously we have $f_{\ast}(\Ll_S(X))=(f\circ\gamma_X)_{\ast}(S)=\Ll_S(Y)$.

For a morphism $\gamma_Y\in\Gs(G/e,Y)$, the fiber product of $f$ and $\gamma_Y$ can be written in the form
\[
\xy
(-12,6)*+{\underset{1\le i\le n}{\amalg}G/e}="0";
(12,6)*+{G/e}="2";
(-12,-6)*+{X}="4";
(12,-6)*+{Y}="6";
(0,0)*+{\square}="7";
(14,-7)*+{,}="9";
{\ar^{\nabla} "0";"2"};
{\ar_<<<<{\underset{1\le i\le n}{\cup}\gamma_i} "0";"4"};
{\ar^{\gamma_Y} "2";"6"};
{\ar_{f} "4";"6"};
\endxy
\]
with some $\gamma_1,\ldots,\gamma_n\in\Gs(G/e,X)$.
Thus for any $s\in S$ we have
\[ f^{\ast}\gamma_{Y\ast}(s)=\prod_{1\le i\le n}\gamma_{i\ast}(s)\ \in\Ll_S(X). \]
Namely, we have $f^{\ast}(\Ll_S(Y))\subseteq\Ll_S(X)$.
\end{proof}

\begin{prop}\label{PropMaxLoc}
Let $S\subseteq M(G/e)$ be a saturated $G$-invariant submonoid. Put $S_0=(\pt_{G/e}^{\ast})^{-1}(S)\subseteq M(G/G)$. For each transitive $X\in\Ob(\Gs)$, define $\Ul_S(X)$ by
\[ \Ul_S(X)=(\, (\pt_X)^{\ast}(S_0)\, )^{\sim}. \]
Then $\Ul_S\subseteq M$ becomes a semi-Mackey subfunctor.
Obviously we have $\Ul_S(G/e)\subseteq S$.
\end{prop}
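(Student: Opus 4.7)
The plan is to verify the two conditions of Remark \ref{Rem3Cond} for transitive $X,Y\in\Ob(\Gs)$. The contravariant half $f^{\ast}(\Ul_S(Y))\subseteq\Ul_S(X)$ is essentially automatic: since $f^{\ast}\pt_Y^{\ast}=\pt_X^{\ast}$, a witness $by=\pt_Y^{\ast}(t_0)$ for $y\in\Ul_S(Y)$ gives $f^{\ast}(b)\cdot f^{\ast}(y)=\pt_X^{\ast}(t_0)\in\pt_X^{\ast}(S_0)$, placing $f^{\ast}(y)$ into the saturation $\Ul_S(X)$.

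The covariant inclusion $f_{\ast}(\Ul_S(X))\subseteq\Ul_S(Y)$ is the heart of the argument, and I would split it into two stages. First, I would show $\pt_{X\ast}(\Ul_S(X))\subseteq S_0$ for each transitive $X=G/K$. Starting from $ax=\pt_X^{\ast}(t_0)$ with $t_0\in S_0$, apply $\pt_{X\ast}$ and then $\pi^{\ast}:=\pt_{G/e}^{\ast}$; Mackey applied to the pullback of $\pi$ against $\pt_X$, combined with the decomposition of $G/e\times X$ into $|G/K|$ copies of $G/e$, should yield $\pi^{\ast}\pt_{X\ast}\pt_X^{\ast}(t_0)=(\pi^{\ast}(t_0))^{|G/K|}$, a power of an element of $S$ and hence in $S$. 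Saturatedness of $S$ (and therefore of $S_0$) then forces $\pt_{X\ast}(x)\in S_0$.

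Second, for a general $f\in\Gs(X,Y)$ between transitive sets, apply Mackey to the pullback of $\pt_X$ against $\pt_Y$, namely the product $X\times Y$ with projections $\pi_X,\pi_Y$, to obtain $\pi_{Y\ast}\pi_X^{\ast}(x)=\pt_Y^{\ast}\pt_{X\ast}(x)$. The decisive step is that the graph $(\id_X,f)\colon X\hookrightarrow X\times Y$ is a $G$-equivariant section of $\pi_X$, so $X\times Y=X\ppr\amalg Z$ as $G$-sets with $\pi_X|_{X\ppr}=\id_X$ and $\pi_Y|_{X\ppr}=f$. The additivity axiom should then yield the factorization
\[ \pi_{Y\ast}\pi_X^{\ast}(x)=f_{\ast}(x)\cdot c,\qquad c:=(\pi_Y|_Z)_{\ast}(\pi_X|_Z)^{\ast}(x)\in M(Y). \]
Combining with the first stage, $c\cdot f_{\ast}(x)=\pt_Y^{\ast}\pt_{X\ast}(x)\in\pt_Y^{\ast}(S_0)$, and saturation gives $f_{\ast}(x)\in\Ul_S(Y)$ as required.

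The remaining assertion $\Ul_S(G/e)\subseteq S$ follows at once from $\pi^{\ast}(S_0)\subseteq S$ and the saturatedness of $S$. The main obstacle I anticipate is the graph-decomposition trick in the second stage: isolating $f_{\ast}(x)$ as the leading factor of $\pi_{Y\ast}\pi_X^{\ast}(x)$ via the splitting $X\times Y=X\ppr\amalg Z$ is precisely what bridges the already-handled case $Y=G/G$ with the general case.
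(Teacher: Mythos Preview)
Your argument is correct and follows the same overall plan as the paper's proof: verify the two conditions of Remark \ref{Rem3Cond}, with the contravariant part being immediate from $\pt_X=\pt_Y\circ f$, and the covariant part relying on the computation $\pt_{G/e}^{\ast}\pt_{X\ast}\pt_X^{\ast}(t_0)=(\pt_{G/e}^{\ast}(t_0))^{|G/K|}\in S$ (the paper isolates this as a lemma). One small point you use implicitly and should state: $S_0=(\pt_{G/e}^{\ast})^{-1}(S)$ is itself saturated, which is what lets you pass from $\pt_{X\ast}(a)\cdot\pt_{X\ast}(x)=\pt_{X\ast}\pt_X^{\ast}(t_0)\in S_0$ to $\pt_{X\ast}(x)\in S_0$.

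The only genuine difference is in your second stage. The paper does not use the graph splitting of $X\times Y$; instead it invokes Proposition \ref{PropLoc} (for the map $\pt_Y$) to produce $b\in M(Y)$ with $b\cdot f_{\ast}\pt_X^{\ast}(t_0)=\pt_Y^{\ast}\pt_{X\ast}\pt_X^{\ast}(t_0)$, and then multiplies through by $f_{\ast}(a)f_{\ast}(s)=f_{\ast}\pt_X^{\ast}(t_0)$. Your graph argument is in fact a direct re-proof of that same divisibility $f_{\ast}(x)\mid \pt_Y^{\ast}\pt_{X\ast}(x)$ (indeed, taking $u=f_{\ast}(x)$ in Proposition \ref{PropLoc} for $\pt_Y$ gives exactly this). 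So the two routes coincide up to whether one quotes Proposition \ref{PropLoc} or unpacks it via the splitting $X\times Y\cong X\amalg Z$ along the graph of $f$; your version has the mild advantage of making the factor $f_{\ast}(x)$ visible without first passing through $f_{\ast}\pt_X^{\ast}(t_0)$, at the cost of redoing an argument already available.
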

\begin{proof}
We show the conditions in Remark \ref{Rem3Cond} are satisfied.

Let $f\in\Gs(X,Y)$ be any morphism between transitive $X,Y\in\Ob(\Gs)$.
For any $s\in\Ul_S(Y)$, by definition, there exist $a\in M(Y)$ and $t\in S_0$ such that $as=\pt_Y^{\ast}(t)$ holds.
Then we have
\[ f^{\ast}(a)f^{\ast}(s)=f^{\ast}\pt_Y^{\ast}(t)=\pt_X^{\ast}(t), \]
which means $f^{\ast}(\Ul_S(Y))\subseteq\Ul_S(X)$.

It remains to show {\rm (i)}. We use the following lemma.
\begin{lem}\label{LemD1}
For any transitive $X\in\Ob(\Gs)$ and any $t\in S_0$, we have
\[ (\pt_X)_{\ast}\pt_X^{\ast}(t)\in S_0. \]
\end{lem}
\begin{proof}
Remark that $\pt_X^{\ast}(t)$ is $G$-invariant. If we take a pull-back diagram
\[
\xy
(-10,6)*+{\underset{n}{\amalg}\, G/e}="0";
(10,6)*+{X}="2";
(-10,-6)*+{G/e}="4";
(10,-6)*+{G/G}="6";
(0,0)*+{\square}="7";
(15,-7)*+{,}="9";
{\ar^{{}^{\exists}\zeta} "0";"2"};
{\ar_{\nabla} "0";"4"};
{\ar^{\pt_X} "2";"6"};
{\ar_{\pt_{G/e}} "4";"6"};
\endxy
\]
then we have $\pt_{G/e}^{\ast}(\pt_X)_{\ast}\pt_X^{\ast}(t)=\nabla_{\ast}\zeta^{\ast}\pt_X^{\ast}(t)=(\pt_{G/e}^{\ast}(t))^n\ \in S$.
\end{proof}

For any $s\in\Ul_S(X)$, by definition, there exist $a\in M(X)$ and $t\in S_0$ such that $as=\pt_X^{\ast}(t)$. Thus we have $f_{\ast}(a)f_{\ast}(s)=f_{\ast}\pt_X^{\ast}(t)$.
By Proposition \ref{PropLoc}, there exists $b\in M(Y)$ satisfying
\[ b\cdot f_{\ast}\pt_X^{\ast}(t)=\pt_Y^{\ast}(\pt_Y)_{\ast}f_{\ast}\pt_X^{\ast}(t)=\pt_Y^{\ast}(\pt_X)_{\ast}\pt_X^{\ast}(t). \]

Thus we obtain
\begin{equation}\label{EqD3}
b\, f_{\ast}(a)f_{\ast}(s)=\pt_Y^{\ast}(\pt_X)_{\ast}\pt_X^{\ast}(t).
\end{equation}
By Lemma \ref{LemD1}, we have $(\pt_X)_{\ast}\pt_X^{\ast}(t)\in S_0$, and thus $(\ref{EqD3})$ implies
\[ f_{\ast}(s)\in (\,\pt_Y^{\ast}(S_0)\,)^{\sim}=\Ul_S(Y), \]
and condition {\rm (i)} follows.
\end{proof}

\begin{prop}\label{PropSMax}
Let $S\subseteq M(G/e)$ be a saturated $G$-invariant submonoid. Then $\Ul_S$ is the maximum one among semi-Mackey subfunctors $\Sl$ satisfying $\Sl(G/e)\subseteq S$.
\end{prop}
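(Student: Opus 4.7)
The plan is to show that every semi-Mackey subfunctor $\Sl\subseteq M$ with $\Sl(G/e)\subseteq S$ is contained in $\Ul_S$. By Remark \ref{Rem3Cond}, it suffices to verify $\Sl(X)\subseteq\Ul_S(X)$ for transitive $X\in\Ob(\Gs)$, since both $\Sl$ and $\Ul_S$ are assembled componentwise from their values on transitive objects via the orbit decomposition.

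Fix a transitive $X$ and pick $s\in\Sl(X)$. I would then apply Proposition \ref{PropLoc} to the constant map $\pt_X\colon X\to G/G$. Using the explicit witness provided there, namely $\bar s=(\pt_X)_{\ast}(s)\in\Sl(G/G)$, this furnishes an element $a\in M(X)$ such that
\[ \pt_X^{\ast}(\bar s)=a\cdot s. \]
To conclude $s\in\Ul_S(X)=(\pt_X^{\ast}(S_0))^{\sim}$, it remains only to check that $\bar s$ lies in $S_0$.

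This is where the hypothesis $\Sl(G/e)\subseteq S$ enters. Since $\Sl$ is a semi-Mackey subfunctor, $\pt_{G/e}^{\ast}(\bar s)\in\Sl(G/e)$, and the hypothesis forces $\pt_{G/e}^{\ast}(\bar s)\in S$, i.e.\ $\bar s\in S_0$ by the very definition $S_0=(\pt_{G/e}^{\ast})^{-1}(S)$. Consequently $a\cdot s=\pt_X^{\ast}(\bar s)\in\pt_X^{\ast}(S_0)$, whence $s\in(\pt_X^{\ast}(S_0))^{\sim}=\Ul_S(X)$, as desired. I do not anticipate a real obstacle: the technical content is already packaged in Proposition \ref{PropLoc}, and the decisive observation is merely that the explicit choice $\bar s=(\pt_X)_{\ast}(s)$ lives in $\Sl(G/G)$, which is precisely what couples the assumption on $\Sl(G/e)$ to the definition of $\Ul_S$.
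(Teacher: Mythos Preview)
Your proof is correct and rests on the same core idea as the paper's: apply Proposition~\ref{PropLoc} to the map $\pt_X\colon X\to G/G$, then observe that $\Sl(G/G)\subseteq S_0$ because $\pt_{G/e}^{\ast}(\Sl(G/G))\subseteq\Sl(G/e)\subseteq S$. The only difference is organizational. The paper first passes to the saturation $\widetilde{\Sl}$, invokes Corollary~\ref{CorSat} to write $\widetilde{\Sl}(X)=(\pt_X^{\ast}(\widetilde{\Sl}(G/G)))^{\sim}$, and then compares with $\Ul_S(X)$; this detour requires the hypothesis that $S$ is saturated (to ensure $\widetilde{\Sl}(G/e)\subseteq S$). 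Your argument unfolds the content of Corollary~\ref{CorSat} directly via Proposition~\ref{PropLoc} and never needs to saturate $\Sl$ or invoke the saturatedness of $S$, so it is slightly more economical while proving the same thing.
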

\begin{proof}
Let $\Sl$ be any semi-Mackey subfunctor satisfying $\Sl(G/e)\subseteq S$.
We have $\Sl\subseteq\widetilde{\Sl}$. Since $S$ is saturated, $\widetilde{\Sl}$ also satisfies $\widetilde{\Sl}(G/e)\subseteq S$.
Since $\widetilde{\Sl}\subseteq M$ is a semi-Mackey subfunctor, we have
\[ \pt_{G/e}^{\ast}(\widetilde{\Sl}(G/G))\subseteq \widetilde{\Sl}(G/e)\subseteq S. \]
Thus it follows
\[ \widetilde{\Sl}(G/G)\subseteq(\pt_{G/e}^{\ast})^{-1}(S)\ \,(=S_0). \]
Since $\widetilde{\Sl}$ is saturated, for any $X\in\Ob(\Gs)$ we have
\[ \widetilde{\Sl}(X)=(\,\pt_X^{\ast}(\widetilde{\Sl}(G/G))\,)^{\sim}\subseteq(\,\pt_X^{\ast}(S_0)\,)^{\sim}=\Ul_S(X) \]
by Corollary \ref{CorSat}.
Thus we obtain $\Sl\subseteq\widetilde{\Sl}\subseteq\Ul_S$.
\end{proof}

\begin{cor}
For any saturated $G$-invariant submonoid $S\subseteq M(G/e)$, we have $\Ll_S\subseteq \Ul_S$. In particular we have $\Ul_S(G/e)=S$.

Moreover, for any semi-Mackey subfunctor $\Sl\subseteq M$ satisfying $\Sl(G/e)=S$, we have $\Ll_S\subseteq \Sl\subseteq \Ul_S$.
\end{cor}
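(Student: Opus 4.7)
The plan is to observe that this corollary is essentially a direct consequence of the minimality and maximality characterizations already established in Propositions~\ref{PropMinLoc} and \ref{PropSMax}, so no genuinely new construction is required.

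First, I would show $\Ll_S\subseteq\Ul_S$ by feeding $\Ll_S$ into the maximality characterization of $\Ul_S$. By Proposition~\ref{PropMinLoc}, $\Ll_S$ is indeed a semi-Mackey subfunctor of $M$, and $\Ll_S(G/e)=S$. In particular $\Ll_S(G/e)\subseteq S$, so Proposition~\ref{PropSMax} applies and yields $\Ll_S\subseteq\Ul_S$ at once.

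Next, to deduce $\Ul_S(G/e)=S$, I would simply squeeze: the inclusion $\Ul_S(G/e)\subseteq S$ is already recorded in Proposition~\ref{PropMaxLoc} (it follows from $\pt_{G/e}^{\ast}(S_0)\subseteq S$ together with $S=\widetilde{S}$), while the reverse inclusion $S=\Ll_S(G/e)\subseteq\Ul_S(G/e)$ is immediate from the inclusion $\Ll_S\subseteq\Ul_S$ obtained in the previous step.

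Finally, for the moreover part, a semi-Mackey subfunctor $\Sl\subseteq M$ with $\Sl(G/e)=S$ satisfies both $\Sl(G/e)\supseteq S$ and $\Sl(G/e)\subseteq S$; the former invokes the minimality of $\Ll_S$ in Proposition~\ref{PropMinLoc} to give $\Ll_S\subseteq\Sl$, while the latter invokes the maximality of $\Ul_S$ in Proposition~\ref{PropSMax} to give $\Sl\subseteq\Ul_S$. There is no real obstacle here: all the technical work (in particular the use of Mackey's condition, Lemma~\ref{LemD1}, and saturation arguments) has been absorbed into the preceding propositions, and this corollary is just the two-sided packaging of them.
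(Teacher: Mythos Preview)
Your proposal is correct and matches the paper's approach exactly: the paper's proof is the single line ``This follows from Proposition~\ref{PropMinLoc} and Proposition~\ref{PropSMax},'' and you have simply unpacked what that means. Your added observation that $\Ul_S(G/e)=S$ follows by squeezing between $\Ll_S(G/e)$ and the bound in Proposition~\ref{PropMaxLoc} is the intended reading.
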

\begin{proof}
This follows from Proposition \ref{PropMinLoc} and Proposition \ref{PropSMax}.
\end{proof}

\section{Fraction of a Tambara functor}

\begin{dfn}\label{DefTamFtr}
A {\it Tambara functor} $T$ {\it on} $G$ is a triplet $T=(T^{\ast},T_+,T_{\bullet})$ of two covariant functors
\[ T_+\colon\Gs\rightarrow\Sett,\ \ T_{\bullet}\colon\Gs\rightarrow\Sett \]
and one contravariant functor
\[ T^{\ast}\colon\Gs\rightarrow\Sett \]
which satisfies the following. 
\begin{enumerate}
\item $T^{\alpha}=(T^{\ast},T_+)$ is a Mackey functor on $G$.
\item $T^{\mu}=(T^{\ast},T_{\bullet})$ is a semi-Mackey functor on $G$.

\noindent Since $T^{\alpha},T^{\mu}$ are semi-Mackey functors, we have $T^{\ast}(X)=T_+(X)=T_{\bullet}(X)$ for each $X\in\Ob(\Gs)$. We denote this by $T(X)$.
\item (Distributive law)
If we are given an exponential diagram
\[
\xy
(-12,6)*+{X}="0";
(-12,-6)*+{Y}="2";
(0,6)*+{A}="4";
(12,6)*+{Z}="6";
(12,-6)*+{B}="8";
(0,0)*+{exp}="10";
{\ar_{f} "0";"2"};
{\ar_{p} "4";"0"};
{\ar_{\lambda} "6";"4"};
{\ar^{\rho} "6";"8"};
{\ar^{q} "8";"2"};
\endxy
\]
in $\Gs$, then
\[
\xy
(-18,7)*+{T(X)}="0";
(-18,-7)*+{T(Y)}="2";
(0,7)*+{T(A)}="4";
(18,7)*+{T(Z)}="6";
(18,-7)*+{T(B)}="8";
{\ar_{T_{\bullet}(f)} "0";"2"};
{\ar_{T_+(p)} "4";"0"};
{\ar^{T^{\ast}(\lambda)} "4";"6"};
{\ar^{T_{\bullet}(\rho)} "6";"8"};
{\ar^{T_+(q)} "8";"2"};
{\ar@{}|\circlearrowright "0";"8"};
\endxy
\]
is commutative. For the exponential diagrams, see \cite{Tam}.
\end{enumerate}

If $T=(T^{\ast},T_+,T_{\bullet})$ is a Tambara functor, then $T(X)$ becomes a ring for each $X\in\Ob(\Gs)$, whose additive (resp. multiplicative) structure is induced from that on $T^{\alpha}(X)$ (resp. $T^{\mu}(X)$).
Those $T^{\ast}(f), T_+(f),T_{\bullet}(f)$ for morphisms $f$ in $\Gs$ are called {\it structure morphisms} of $T$. For each $f\in\Gs(X,Y)$,
\begin{itemize}
\item $T^{\ast}(f)\colon T(Y)\rightarrow T(X)$ is a ring homomorphism, called the {\it restriction} along $f$. 
\item $T_+(f)\colon T(X)\rightarrow T(Y)$ is an additive homomorphism, called the {\it additive transfer} along $f$.
\item $T_{\bullet}(f)\colon T(X)\rightarrow T(Y)$ is a multiplicative homomorphism, called the {\it multiplicative transfer} along $f$.
\end{itemize}
$T^{\ast}(f),T_+(f),T_{\bullet}(f)$ are often abbreviated to $f^{\ast},f_+,f_{\bullet}$.

A {\it morphism} of Tambara functors $\varphi\colon T\rightarrow S$ is a family of ring homomorphisms
\[  \varphi=\{\varphi_X\colon T(X)\rightarrow S(X) \}_{X\in\Ob(\Gs)}, \]
natural with respect to all of the contravariant and the covariant parts. We denote the category of Tambara functors by $\TamG$.
\end{dfn}

\begin{ex}\label{ExTamFtr}
$\ \ $
\begin{enumerate}
\item If we define $\Omega$ by
\[ \Omega(X)=K_0(\Gs/X) \]
for each $X\in\Ob(\Gs)$, where the right hand side is the Grothendieck ring of the category of finite $G$-sets over $X$, then $\Omega$ becomes a Tambara functor on $G$. This is called the {\it Burnside Tambara functor} (\cite{Tam} or \cite{N_IdealTam}).
%

\item Let $R$ be a $G$-ring. If we define $\mathcal{P}_R$ by
\[ \mathcal{P}_R(X)=\{ G\text{-maps from}\ X\ \text{to}\ R \} \]
for each $X\in\Ob(\Gs)$, then $\mathcal{P}_R$ becomes a Tambara functor on $G$. This is called the {\it fixed point functor} associated to $R$ (\cite{Tam} or \cite{N_IdealTam}).
\end{enumerate}
\end{ex}

In this section, we construct a fraction of a Tambara functor by a semi-Mackey subfunctor $\Sl\subseteq T^{\mu}$. As in Example \ref{ExTrivLoc}, we have a trivial semi-Mackey subfunctor $(T^{\mu})^{\times}$, which we also denote simply by $T^{\times}$.

\begin{prop}\label{PropTamLoc}
Let $T$ be a Tambara functor on $G$ and let $\Sl\subseteq T^{\mu}$ be a semi-Mackey subfunctor.
Then $\Sl^{-1}T=\{ \Sl(X)^{-1}T(X) \}_{X\in\Ob(\Gs)}$ has a structure of a Tambara functor induced from that on $T$.

Moreover, the natural ring homomorphisms
\[ \ell_{\Sl,X}\colon T(X)\rightarrow\Sl^{-1}T(X)\ ;\ x\mapsto \frac{x}{1}\quad({}^{\forall}X\in\Ob(\Gs)) \]
form a morphism of Tambara functors $\ell_{\Sl}\colon T\rightarrow\Sl^{-1}T$.
\end{prop}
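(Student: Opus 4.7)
The plan is to construct the three types of structure morphisms on $\Sl^{-1}T$ separately and then reduce the Tambara axioms to those already known for $T$.

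For the ring structure and restriction, each $T(X)$ is a commutative ring and each $\Sl(X)\subseteq T(X)$ a multiplicative submonoid, so $\Sl^{-1}T(X)=\Sl(X)^{-1}T(X)$ is the classical ring fraction. For any $f\in\Gs(X,Y)$, the ring homomorphism $f^{\ast}\colon T(Y)\to T(X)$ sends $\Sl(Y)$ into $\Sl(X)$, so by the universal property of ring fractions it extends uniquely to a ring homomorphism $f^{\ast}\colon\Sl^{-1}T(Y)\to\Sl^{-1}T(X)$ with $\tfrac{y}{t}\mapsto\tfrac{f^{\ast}(y)}{f^{\ast}(t)}$. The multiplicative transfer $f_{\bullet}$ is supplied by Proposition \ref{PropSemiMackLoc} applied to the semi-Mackey functor $T^{\mu}$, producing a monoid homomorphism $f_{\bullet}\colon\Sl^{-1}T(X)\to\Sl^{-1}T(Y)$ with $\tfrac{x}{s}\mapsto\tfrac{f_{\bullet}(x)}{f_{\bullet}(s)}$.

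The additive transfer is the crux. The key reduction, coming from Proposition \ref{PropLoc}, is that every element of $\Sl^{-1}T(X)$ can be represented with denominator in $f^{\ast}\Sl(Y)$: for $s\in\Sl(X)$, choose $a\in T(X)$ with $as=f^{\ast}f_{\bullet}(s)$, whence $\tfrac{x}{s}=\tfrac{ax}{f^{\ast}f_{\bullet}(s)}$. I then define
\[
f_+\!\left(\frac{x}{f^{\ast}(\bar s)}\right):=\frac{f_+(x)}{\bar s}\qquad(x\in T(X),\ \bar s\in\Sl(Y)).
\]
Well-definedness is the main technical point and depends on Frobenius reciprocity $f_+(x\cdot f^{\ast}(y))=f_+(x)\cdot y$, which is a standard consequence of the distributive law in $T$. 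If $\tfrac{x}{f^{\ast}\bar s}=\tfrac{x'}{f^{\ast}\bar s'}$, one annihilator of the difference $f^{\ast}\bar s'\cdot x-f^{\ast}\bar s\cdot x'$ lies in $\Sl(X)$, and by the same trick may be promoted into $f^{\ast}\Sl(Y)$; applying $f_+$ and Frobenius then yields $\bar u(\bar s'\cdot f_+(x)-\bar s\cdot f_+(x'))=0$ in $T(Y)$, which is the desired equality in $\Sl^{-1}T(Y)$. Additivity of $f_+$ and functoriality $(gf)_+=g_+\circ f_+$ reduce to bookkeeping with common denominators of the form $f^{\ast}(\bar s)$.

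With the three structure morphisms in place, the Mackey axioms for $\Sl^{-1}T^{\alpha}$ and $\Sl^{-1}T^{\mu}$ lift from those in $T$ by evaluating on representatives with denominator $1$ (or, where needed, with denominators in $f^{\ast}\Sl(Y)$) and using naturality of the universal extensions. The same technique — clear denominators to land in $T$, invoke the identity there, then push back via the explicit formulas — handles the distributive law over any exponential diagram, again leaning on Frobenius reciprocity. Finally, $\ell_{\Sl}\colon T\to\Sl^{-1}T$ is a ring homomorphism objectwise by construction and commutes with $f^{\ast}$, $f_{\bullet}$, $f_+$ by the formulas $f^{\ast}(\tfrac{y}{1})=\tfrac{f^{\ast}(y)}{1}$, $f_{\bullet}(\tfrac{x}{1})=\tfrac{f_{\bullet}(x)}{1}$, and $f_+(\tfrac{x}{1})=f_+(\tfrac{x}{f^{\ast}(1)})=\tfrac{f_+(x)}{1}$. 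The hardest step will be the interaction of $f_+$ with the distributive law, since it requires simultaneously controlling how additive and multiplicative transfers behave when the exponential-diagram parameters sit in $\Sl$.
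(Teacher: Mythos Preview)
Your proposal is correct and follows essentially the same route as the paper: the paper defines $f_+\bigl(\tfrac{x}{s}\bigr)=\tfrac{f_+(ax)}{f_\bullet(s)}$ with $f^\ast f_\bullet(s)=as$, which is exactly your definition after rewriting $\tfrac{x}{s}=\tfrac{ax}{f^\ast f_\bullet(s)}$ (the paper even remarks that any $\bar s\in\Sl(Y)$ with $f^\ast(\bar s)=as$ works, which is your formulation). One small refinement: in the distributive-law verification the decisive inputs are the distributive law for $T$ together with the multiplicative Mackey condition on the pullback square contained in the exponential diagram (yielding $q^\ast f_\bullet p_\bullet(s)=\rho_\bullet\lambda^\ast p^\ast p_\bullet(s)$), rather than Frobenius reciprocity, which is what drives well-definedness and additivity of $f_+$.
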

\begin{proof}
As shown in Proposition \ref{PropSemiMackLoc}, $\Sl^{-1}T^{\mu}$ has a structure of a semi-Mackey functor, with structure morphisms defined by
\begin{eqnarray*}
&f^{\ast}(\frac{y}{t})=\frac{f^{\ast}(y)}{f^{\ast}(t)}\quad\ ({}^{\forall}\,\frac{y}{t}\in\Sl^{-1}T(Y)),& \\
&f_{\bullet}(\frac{x}{s})=\frac{f_{\bullet}(x)}{f_{\bullet}(s)}\quad\ ({}^{\forall}\,\frac{x}{s}\in\Sl^{-1}T(X)),&
\end{eqnarray*}
for each $f\in\Gs(X,Y)$.

Thus it suffices to give additive transfers for $\Sl^{-1}T$, compatibly with the structure on $\Sl^{-1}T^{\mu}$.
Let $f\in\Gs(X,Y)$ be any morphism.

Let $\frac{x}{s}\in\Sl^{-1}T(X)$ be any element. If we put $\bar{s}=f_{\bullet}(s)$, then by Proposition \ref{PropLoc}, we have $f^{\ast}(\bar{s})=as$ for some $a\in T(X)$.
We define the additive transfer of $\Sl^{-1}T$ along $f$ by
\begin{equation}
\label{EqAdd}
f_+\colon \Sl^{-1}T(X)\rightarrow\Sl^{-1}T(Y)\ ;\ \frac{x}{s}\mapsto \frac{f_+(ax)}{\bar{s}}.
\end{equation}

To show the well-definedness, suppose we have $\frac{x}{s}=\frac{x\ppr}{s\ppr}$ in $\Sl^{-1}T(X)$. Namely, there exists $t\in\Sl(X)$ such that $ts\ppr x=tsx\ppr$. Let $a,a\ppr,b\in T(X)$ and $\bar{s},\bar{s}\ppr,\bar{t}\in\Sl(Y)$ be elements satisfying
\begin{equation}
f^{\ast}(\bar{s})=as,\ \ f^{\ast}(\bar{s}\ppr)=a\ppr s\ppr.
\label{Eq_ss}
\end{equation}
and
\[ f^{\ast}(\bar{t})=bt. \]
Then, by the projection formula, we have
\begin{eqnarray*}
&\bar{t}\bar{s}\ppr f_+(ax)=f_+(axf^{\ast}(\bar{t}\bar{s}\ppr))=f_+(axbta\ppr s\ppr)=f_+(aa\ppr bts\ppr x),&\\
&\bar{t}\bar{s}f_+(a\ppr x\ppr)=f_+(a\ppr x\ppr f^{\ast}(\bar{t}\bar{s}))=f_+(a\ppr x\ppr btas)=f_+(aa\ppr bts\ppr x).&
\end{eqnarray*}
This means we have $\frac{f_+(ax)}{\bar{s}}=\frac{f_+(a\ppr x\ppr)}{\bar{s}\ppr}$ in $\Sl^{-1}T(Y)$, and $f_+$ is well-defined. Also, this argument shows that we can use arbitrary $a\in T(X)$ and $\bar{s}\in\Sl(Y)$ instead of $f_{\bullet}(s)$ to define $f_+(\frac{x}{s})$ by $(\ref{EqAdd})$, as long as they satisfy $f^{\ast}(\bar{s})=as$.

To show the additivity of $f_+$, let $\frac{x}{s}$ and $\frac{x\ppr}{s\ppr}$ be arbitrary elements in $\Sl^{-1}T(X)$, and take $a,a\ppr, \bar{s},\bar{s}\ppr$ satisfying $(\ref{Eq_ss})$.
Then we have $f^{\ast}(\bar{s}\bar{s}\ppr)=aa\ppr ss\ppr$, and thus
\[ f_+(\frac{x}{s}+\frac{x\ppr}{s\ppr})=f_+(\frac{s\ppr x+sx\ppr}{ss\ppr})=\frac{f_+(aa\ppr(s\ppr x+sx\ppr))}{\bar{s}\bar{s}\ppr}. \]
On the other hand, we have
\[ f_+(\frac{x}{s})+f_+(\frac{x\ppr}{s\ppr})=\frac{f_+(ax)}{\bar{s}}+\frac{f_+(a\ppr x\ppr)}{\bar{s}\ppr}=\frac{\bar{s}\ppr f_+(ax)+\bar{s}f_+(a\ppr x\ppr)}{\bar{s}\bar{s}\ppr}. \]
By the projection formula, we have
\begin{eqnarray*}
\bar{s}\ppr f_+(ax)+\bar{s}f_+(a\ppr x\ppr)&=&f_+(axf^{\ast}(\bar{s}\ppr))+f_+(a\ppr x\ppr f^{\ast}(\bar{s}))\\
&=&f_+(aa\ppr(s\ppr x+sx\ppr)),
\end{eqnarray*}
and thus
\[ f_+(\frac{x}{s}+\frac{x\ppr}{s\ppr})=f_+(\frac{x}{s})+f_+(\frac{x\ppr}{s\ppr}). \]

With these definitions, we can easily confirm $\ell_{\Sl,X}\circ f^{\ast}=f^{\ast}\circ\ell_{\Sl,Y}$, $\ell_{\Sl,Y}\circ f_{\bullet}=f_{\bullet}\circ\ell_{\Sl,X}$ and $\ell_{\Sl,Y}\circ f_+=f_+\circ\ell_{\Sl,X}$ for each $f\in\Gs(X,Y)$. 

It remains to show the compatibilities between these (to-be) structure morphisms.
\begin{enumerate}
\item[{\rm (i)}] {\bf (functoriality of $(\,)_+$)}

Let $X\overset{f}{\rightarrow}Y\overset{g}{\rightarrow}Z$ be a sequence of morphisms in $\Gs$. For any $\frac{x}{s}\in\Sl^{-1}T(X)$, there exist $a\in T(X)$ and $b\in T(Y)$ satisfying
\begin{eqnarray}
\label{Eq_+1} f^{\ast}f_{\bullet}(s)&=&as,\\
\label{Eq_+2} g^{\ast}g_{\bullet}(f_{\bullet}(s))&=&bf_{\bullet}(s),
\end{eqnarray}
by Proposition \ref{PropLoc}. Thus we have
\[ g_+f_+(\frac{x}{s})=g_+(\frac{f_+(ax)}{f_{\bullet}(s)})=\frac{g_+(b\, f_+(ax))}{g_{\bullet}f_{\bullet}(s)}. \]
On the other hand by $(\ref{Eq_+1})$ and $(\ref{Eq_+2})$, we have
\[ (g\circ f)^{\ast}(g\circ f)_{\bullet}(s)=f^{\ast}(b)\, f^{\ast}f_{\bullet}(s)=f^{\ast}(b) as, \]
and thus
\[ (g\circ f)_+(\frac{x}{s})=\frac{(g\circ f)_+(f^{\ast}(b)ax)}{(g\circ f)_{\bullet}(s)}=\frac{g_+(b\, f_+(ax))}{g_{\bullet}f_{\bullet}(s)}. \]

\item[{\rm (ii)}] ({\bf Mackey condition for $(\Sl^{-1}T)^{\alpha}$})

Let
\[
\xy
(-7,6)*+{X\ppr}="0";
(7,6)*+{X}="2";
(-7,-6)*+{Y\ppr}="4";
(7,-6)*+{Y}="6";
(0,0)*+{\square}="7";
{\ar^{\xi} "0";"2"};
{\ar_{f\ppr} "0";"4"};
{\ar^{f} "2";"6"};
{\ar_{\eta} "4";"6"};
\endxy
\]
be any pullback diagram in $\Gs$.
For any $\frac{y}{t}\in\Sl^{-1}T(Y\ppr)$, there exists $b\ppr\in T(Y\ppr)$ satisfying
\begin{equation}
\label{Eq_+3}
\eta^{\ast}\eta_{\bullet}(t)=b\ppr t,
\end{equation}
and
\[ f^{\ast}\eta_{\bullet}(\frac{y}{t})=f^{\ast}(\frac{\eta_+(b\ppr y)}{\eta_{\bullet}(t)})=\frac{f^{\ast}\eta_+(b\ppr y)}{f^{\ast}\eta_{\bullet}(t)}=\frac{\xi_+f^{\prime\ast}(b\ppr y)}{\xi_{\bullet}f^{\prime\ast}(t)}. \]
On the other hand by $(\ref{Eq_+3})$, we have
\[ \xi^{\ast}\xi_{\bullet}f^{\prime\ast}(t)=\xi^{\ast}f^{\ast}\eta_{\bullet}(t)=f^{\prime\ast}\eta^{\ast}\eta_{\bullet}(t)=f^{\prime\ast}(b\ppr) f^{\prime\ast}(t), \]
and thus
\[ \xi_+f^{\prime\ast}(\frac{y}{t})=\xi_+(\frac{f^{\prime\ast}(y)}{f^{\prime\ast}(t)})=\frac{\xi_+(f^{\prime\ast}(b\ppr)f^{\prime\ast}(y))}{\xi_{\bullet}f^{\prime\ast}(t)}. \]

\item[{\rm (iii)}] ({\bf distributive law for $\Sl^{-1}T$})

Let
\[
\xy
(-12,6)*+{X}="0";
(-12,-6)*+{Y}="2";
(0,6)*+{A}="4";
(12,6)*+{Z}="6";
(12,-6)*+{B}="8";
(0,0)*+{exp}="10";
{\ar_{f} "0";"2"};
{\ar_{p} "4";"0"};
{\ar_{\lambda} "6";"4"};
{\ar^{\rho} "6";"8"};
{\ar^{q} "8";"2"};
\endxy
\]
be any exponential diagram in $\Gs$. 
For any $\frac{x}{s}\in\Sl^{-1}T(A)$, there exists $a\in T(A)$ satisfying
\begin{equation}
\label{Eq_+4}
p^{\ast}p_{\bullet}(s)=as,
\end{equation}
and
\[ f_{\bullet}p_+(\frac{x}{s})=f_{\bullet}(\frac{p_+(ax)}{p_{\bullet}(s)})=\frac{f_{\bullet}p_+(ax)}{f_{\bullet}p_{\bullet}(s)}=\frac{q_+\rho_{\bullet}\lambda^{\ast}(ax)}{f_{\bullet}p_{\bullet}(s)}. \]
On the other hand, if we put $\bar{s}=f_{\bullet}p_{\bullet}(s)$ and $b=\rho_{\bullet}\lambda^{\ast}(a)$, then by $(\ref{Eq_+4})$, we have
\[ q^{\ast}(\bar{s})=q^{\ast}f_{\bullet}p_{\bullet}(s)=\rho_{\bullet}\lambda^{\ast}p^{\ast}p_{\bullet}(s)=\rho_{\bullet}\lambda^{\ast}(as)=b\,\rho_{\bullet}\lambda^{\ast}(s), \]
and thus
\[ q_+\rho_{\bullet}\lambda^{\ast}(\frac{x}{s})=q_+(\frac{\rho_{\bullet}\lambda^{\ast}(x)}{\rho_{\bullet}\lambda^{\ast}(s)})=\frac{q_+(b\,\rho_{\bullet}\lambda^{\ast}(x))}{\bar{s}}=\frac{q_+(\rho_{\bullet}\lambda^{\ast}(ax))}{f_{\bullet}p_{\bullet}(s)}. \]
\end{enumerate}

Thus $\Sl^{-1}T$ becomes a Tambara functor, and Proposition \ref{PropTamLoc} is shown.
\end{proof}

\begin{rem}\label{RemTrivLoc}
Let $T$ be a Tambara functor on $G$.
\begin{enumerate}
\item If a semi-Mackey subfunctor $\Sl\subseteq T^{\mu}$ satisfies $\Sl\subseteq T^{\times}$, then $\ell_{\Sl}$ becomes an isomorphism of Tambara functors. In particular if $\Sl$ belongs to $\MackG$, then we have $\Sl\subseteq T^{\times}$ and thus $\ell_{\Sl}$ is an isomorphism.

\item For any semi-Mackey subfunctor $\Sl\subseteq T^{\mu}$, we have a natural isomorphism $\Sl^{-1}T\overset{\cong}{\rightarrow}\widetilde{\Sl}^{-1}T$ of Tambara functors compatible with $\ell_{\Sl}$ and $\ell_{\widetilde{\Sl}}$.
\end{enumerate}
\end{rem}
\begin{proof}
These can be confirmed on each object $X\in\Ob(\Gs)$, by the ordinary commutative ring theory.
\end{proof}


Naturally, the morphism $\ell_{\Sl}\colon T\rightarrow\Sl^{-1}T$ satisfies the expected universality.
\begin{prop}\label{PropLocUniv}
Let $\varphi\colon T\rightarrow T\ppr$ be a morphism of Tambara functors, and let $\Sl\subseteq T^{\mu}$, $\Sl\ppr\subseteq T^{\prime\mu}$ be semi-Mackey subfunctors. If $\varphi$ satisfies $\varphi(\Sl)\subseteq\Sl\ppr$, then there exists unique morphism
\[ \widetilde{\varphi}\colon\Sl^{-1}T\rightarrow\Sl^{\prime-1}T\ppr \]
compatible with $\varphi$.
\[
\xy
(-12,6)*+{T}="0";
(12,6)*+{T\ppr}="2";
(-12,-6)*+{\Sl^{-1}T}="4";
(12,-6)*+{\Sl^{\prime-1}T\ppr}="6";
{\ar^{\varphi} "0";"2"};
{\ar_{\ell_{\Sl}} "0";"4"};
{\ar^{\ell_{\Sl\ppr}} "2";"6"};
{\ar_{\widetilde{\varphi}} "4";"6"};
{\ar@{}|{\circlearrowright} "0";"6"};
\endxy
\]
\end{prop}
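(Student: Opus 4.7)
The plan is to define $\widetilde{\varphi}$ objectwise via the ordinary universal property of ring localization, and then verify compatibility with the three kinds of structure morphisms separately. For each $X\in\Ob(\Gs)$, the composition $\ell_{\Sl\ppr,X}\circ\varphi_X\colon T(X)\to \Sl^{\prime-1}T\ppr(X)$ carries $\Sl(X)$ into the units of $\Sl^{\prime-1}T\ppr(X)$, because $\varphi(\Sl)\subseteq\Sl\ppr$. The classical universal property of the localization of a commutative ring therefore yields a unique ring homomorphism
\[ \widetilde{\varphi}_X\colon\Sl^{-1}T(X)\longrightarrow \Sl^{\prime-1}T\ppr(X),\quad \frac{x}{s}\mapsto \frac{\varphi_X(x)}{\varphi_X(s)}, \]
and the family $\widetilde{\varphi}=\{\widetilde{\varphi}_X\}_{X\in\Ob(\Gs)}$ obviously satisfies $\widetilde{\varphi}\circ\ell_{\Sl}=\ell_{\Sl\ppr}\circ\varphi$. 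Uniqueness on each $X$ is forced by the same universal property (any such $\widetilde{\varphi}_X$ must send $x/1$ to $\varphi_X(x)$ and $1/s$ to $\varphi_X(s)^{-1}$); alternatively it follows from Proposition \ref{PropSemiMackLoc}(3) applied componentwise.

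Next I would check naturality of $\widetilde{\varphi}$ with respect to all structure morphisms. For a morphism $f\in\Gs(X,Y)$, compatibility with $f^{\ast}$ and with $f_{\bullet}$ is immediate from the explicit formulas in Proposition \ref{PropTamLoc}, together with the fact that $\varphi$ already commutes with $f^{\ast}$ and $f_{\bullet}$ on $T$.

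The delicate verification is naturality with respect to the additive transfer $f_+$, because its definition on the localization (see $(\ref{EqAdd})$) depends on auxiliary choices. Given $\frac{x}{s}\in\Sl^{-1}T(X)$, pick $\bar{s}=f_{\bullet}(s)\in\Sl(Y)$ and $a\in T(X)$ with $f^{\ast}(\bar{s})=as$; then $f_+(\frac{x}{s})=\frac{f_+(ax)}{\bar{s}}$. Applying $\widetilde{\varphi}_Y$ gives
\[ \widetilde{\varphi}_Y f_+(\tfrac{x}{s})=\frac{\varphi_Y(f_+(ax))}{\varphi_Y(\bar{s})}=\frac{f_+(\varphi_X(a)\varphi_X(x))}{\varphi_Y(\bar{s})}. \]
For the other direction, $\widetilde{\varphi}_X(\frac{x}{s})=\frac{\varphi_X(x)}{\varphi_X(s)}$, and to evaluate $f_+$ on this element in $\Sl^{\prime-1}T\ppr$ I would take as auxiliary data $\bar{s}\ppr=\varphi_Y(\bar{s})\in\Sl\ppr(Y)$ (an element of $\Sl\ppr(Y)$ since $\varphi(\Sl)\subseteq\Sl\ppr$) and $a\ppr=\varphi_X(a)$; the identity
\[ f^{\ast}(\bar{s}\ppr)=f^{\ast}\varphi_Y(\bar{s})=\varphi_X(f^{\ast}(\bar{s}))=\varphi_X(as)=a\ppr\,\varphi_X(s) \]
shows this choice is admissible, and the remark following $(\ref{EqAdd})$ (that any admissible choice gives the same value) then yields
\[ f_+\widetilde{\varphi}_X(\tfrac{x}{s})=\frac{f_+(a\ppr\,\varphi_X(x))}{\bar{s}\ppr}=\frac{f_+(\varphi_X(a)\varphi_X(x))}{\varphi_Y(\bar{s})}, \]
matching the previous expression. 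The main obstacle I anticipate is precisely this last step: one must be careful that the a priori differently defined $f_+$ on the target localization agrees with the transported one, and the cleanest way is to exploit the choice-independence already established in the proof of Proposition \ref{PropTamLoc}. Once this is in hand, $\widetilde{\varphi}$ is a morphism of Tambara functors, and the diagram of the statement commutes by construction.
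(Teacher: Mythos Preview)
Your proposal is correct and follows essentially the same approach as the paper: both construct $\widetilde{\varphi}_X$ objectwise via the universal property of ring localization, note that compatibility with $f^{\ast}$ and $f_{\bullet}$ is immediate, and for $f_+$ transport the auxiliary pair $(\bar{s},a)=(f_{\bullet}(s),a)$ through $\varphi$ to obtain an admissible pair $(\varphi_Y(\bar{s}),\varphi_X(a))$ on the target side. The paper writes this last step slightly more tersely (observing $f^{\ast}f_{\bullet}\varphi_X(s)=\varphi_X(a)\varphi_X(s)$ directly), but the content is identical.
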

\begin{proof}
By the ordinary commutative ring theory, there exists a unique ring homomorphism
\[ \widetilde{\varphi}_X\colon\Sl^{-1}T(X)\rightarrow\Sl^{\prime-1}T\ppr(X) \]
for each $X\in\Ob(\Gs)$, satisfying $\widetilde{\varphi}_X\circ\ell_{\Sl,X}=\ell_{\Sl\ppr,X}\circ\varphi_X$. This is given by
\[ \widetilde{\varphi}_X(\frac{x}{s})=\frac{\varphi_X(x)}{\varphi_X(s)} \]
for any $\frac{x}{s}\in\Sl^{-1}T(X)$.

It suffices to show $\widetilde{\varphi}=\{\widetilde{\varphi}_X\}_{X\in\Ob(\Gs)}$ is compatible with $f^{\ast},f_+,f_{\bullet}$ for any morphism $f\in\Gs(X,Y)$. 
Compatibility with $f^{\ast}$ and $f_{\bullet}$ immediately follows from the definitions.

We show the compatibility with $f_+$.
For any $\frac{x}{s}\in\Sl^{-1}T(X)$, there exists $a\in T(X)$ satisfying $f^{\ast}f_{\bullet}(s)=as$.
It follows
\[ f^{\ast}f_{\bullet}\varphi_X(s)=\varphi_Xf^{\ast}f_{\bullet}(s)=\varphi_X(a)\varphi_X(s), \]
and thus we obtain
\begin{eqnarray*}
\widetilde{\varphi}_Yf_+(\frac{x}{s})&=&\widetilde{\varphi}_Y(\frac{f_+(ax)}{f_{\bullet}(s)})\ =\ \frac{\varphi_Yf_+(ax)}{\varphi_Yf_{\bullet}(s)}\\
&=&\frac{f_+\varphi_X(ax)}{f_{\bullet}\varphi_X(s)}\ =\ f_+(\frac{\varphi_X(x)}{\varphi_X(s)})\ =\ f_+\widetilde{\varphi}_X(\frac{x}{s}).
\end{eqnarray*}
\[
\xy
(-20,6)*+{\Sl^{-1}T(X)}="0";
(20,6)*+{\Sl^{\prime-1}T\ppr(X)}="2";
(-20,-6)*+{\Sl^{-1}T(Y)}="4";
(20,-6)*+{\Sl^{\prime-1}T\ppr(Y)}="6";
{\ar^{\widetilde{\varphi}_X} "0";"2"};
{\ar_{f_+} "0";"4"};
{\ar^{f_+} "2";"6"};
{\ar_{\widetilde{\varphi}_Y} "4";"6"};
{\ar@{}|{\circlearrowright} "0";"6"};
\endxy
\]
\end{proof}

\begin{cor}\label{CorLocUniv}
Let $T$ be a Tambara functor, and let $\Sl\subseteq T^{\mu}$ be a semi-Mackey subfunctor. Then $\ell_{\Sl}$ gives a bijection between the morphisms $\Sl^{-1}T\rightarrow T\ppr$ and the morphisms $\varphi\colon T\rightarrow T\ppr$ satisfying $\varphi(\Sl)\subseteq T^{\prime\times}$$:$
\begin{eqnarray*}
-\circ \ell_{\Sl}\ \colon\ \{ \varphi\in\TamG(T,T\ppr)\mid \varphi(\Sl)\subseteq T^{\prime\times}\}&\overset{\cong}{\longrightarrow}&\TamG(\Sl^{-1}T,T\ppr)\\
\varphi&\mapsto&\varphi\circ\ell_{\Sl}
\end{eqnarray*}
\end{cor}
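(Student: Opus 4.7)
The plan is to deduce this as a direct consequence of Proposition \ref{PropLocUniv}, applied to $\Sl\ppr=T^{\prime\times}\subseteq T^{\prime\mu}$, combined with Remark \ref{RemTrivLoc}(1). The map $(-\circ\ell_{\Sl})$ appearing in the statement should be read as the bijection sending $\widetilde{\varphi}\in\TamG(\Sl^{-1}T,T\ppr)$ to $\widetilde{\varphi}\circ\ell_{\Sl}$, and the content of the corollary is both that this assignment lands in the stated subset and that it admits an inverse.

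First I would check that the assignment $\widetilde{\varphi}\mapsto\widetilde{\varphi}\circ\ell_{\Sl}$ is well-defined, i.e.\ that $\varphi:=\widetilde{\varphi}\circ\ell_{\Sl}$ satisfies $\varphi(\Sl)\subseteq T^{\prime\times}$. This is immediate: for any $X\in\Ob(\Gs)$ and $s\in\Sl(X)$, the element $\frac{s}{1}\in\Sl^{-1}T(X)$ has inverse $\frac{1}{s}$, so $\varphi_X(s)=\widetilde{\varphi}_X(\frac{s}{1})$ is a unit of $T\ppr(X)$ since ring homomorphisms preserve invertibility.

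Next I would construct the inverse map. Given $\varphi\colon T\rightarrow T\ppr$ with $\varphi(\Sl)\subseteq T^{\prime\times}$, Proposition \ref{PropLocUniv} applied to the pair $(\Sl,T^{\prime\times})$ yields a unique morphism of Tambara functors $\widetilde{\varphi}\colon\Sl^{-1}T\rightarrow(T^{\prime\times})^{-1}T\ppr$ with $\widetilde{\varphi}\circ\ell_{\Sl}=\ell_{T^{\prime\times}}\circ\varphi$. By Remark \ref{RemTrivLoc}(1), since $T^{\prime\times}\subseteq T^{\prime\times}$, the morphism $\ell_{T^{\prime\times}}\colon T\ppr\rightarrow(T^{\prime\times})^{-1}T\ppr$ is an isomorphism of Tambara functors. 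Setting $\widehat{\varphi}:=\ell_{T^{\prime\times}}^{-1}\circ\widetilde{\varphi}\colon\Sl^{-1}T\rightarrow T\ppr$ then gives a morphism of Tambara functors satisfying $\widehat{\varphi}\circ\ell_{\Sl}=\varphi$.

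Finally I would verify that the two assignments are mutually inverse. Surjectivity of $(-\circ\ell_{\Sl})$ is immediate from the previous paragraph. For injectivity, suppose $\psi\colon\Sl^{-1}T\rightarrow T\ppr$ satisfies $\psi\circ\ell_{\Sl}=\varphi$. Then on each $X\in\Ob(\Gs)$, $\psi_X$ is a ring homomorphism with $\psi_X(\frac{x}{1})=\varphi_X(x)$ and $\psi_X(\frac{1}{s})=\psi_X(\frac{s}{1})^{-1}=\varphi_X(s)^{-1}$, so $\psi_X(\frac{x}{s})=\varphi_X(x)\varphi_X(s)^{-1}$ is completely determined by $\varphi$. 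This forces $\psi=\widehat{\varphi}$.

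There is no serious obstacle: the technical work of constructing the extension has already been carried out in Proposition \ref{PropLocUniv}, and Remark \ref{RemTrivLoc}(1) supplies the invertibility of $\ell_{T^{\prime\times}}$ that lets us land in $T\ppr$ rather than in $(T^{\prime\times})^{-1}T\ppr$. The only point requiring care is the identification of the corollary as the case $\Sl\ppr=T^{\prime\times}$ of the proposition.
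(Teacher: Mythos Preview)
Your proof is correct and follows exactly the approach the paper indicates: the paper's own proof is the single line ``This immediately follows from Remark \ref{RemTrivLoc} and Proposition \ref{PropLocUniv},'' and you have simply spelled out how those two ingredients combine (applying Proposition \ref{PropLocUniv} with $\Sl\ppr=T^{\prime\times}$ and then using Remark \ref{RemTrivLoc}(1) to identify $(T^{\prime\times})^{-1}T\ppr$ with $T\ppr$). Your observation that the displayed map should be read as precomposition on $\TamG(\Sl^{-1}T,T\ppr)$ is also correct---the direction as written in the statement is a slip.
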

\begin{proof}
This immediately follows from Remark \ref{RemTrivLoc} and Proposition \ref{PropLocUniv}.
\end{proof}

Once Proposition \ref{PropLoc} is shown, some natural compatibilities immediately follows from Proposition \ref{PropLocUniv}.

\begin{cor}\label{CorLocLoc}
Let $\varphi\colon T\rightarrow T\ppr$ be a morphism of Tambara functors.
\begin{enumerate}
\item If $\Sl\subseteq T^{\mu}$ is a semi-Mackey subfunctor, then
\[ \varphi(\Sl)=\{ \varphi_X(\Sl(X))\}_{X\in\Ob(\Gs)} \]
gives a semi-Mackey subfunctor $\varphi(\Sl)\subseteq T^{\prime\mu}$, and we obtain a morphism $\Sl^{-1}T\rightarrow (\varphi(\Sl))^{-1}T\ppr$ compatible with $\varphi$.
\item If $\Sl\ppr\subseteq T^{\prime\mu}$ is a semi-Mackey subfunctor, then
\[ \varphi^{-1}(\Sl\ppr)=\{\varphi_X^{-1}(\Sl\ppr(X))\}_{X\in\Ob(\Gs)} \]
gives a semi-Mackey subfunctor $\varphi^{-1}(\Sl\ppr)\subseteq T$, and we obtain a morphism $(\varphi^{-1}(\Sl\ppr))^{-1}T\rightarrow \Sl^{\prime-1}T\ppr$ compatible with $\varphi$.
\end{enumerate}
\end{cor}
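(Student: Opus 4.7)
The plan is to deduce both statements directly from Proposition \ref{PropLocUniv}. In each case the substantive content reduces to verifying that the indicated family of subsets forms a semi-Mackey subfunctor; once this is in hand, the existence of the compatible morphism between fractions is automatic from the universal property.

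For part (1), I would first note that since each $\varphi_X$ is a ring homomorphism, its image on any submonoid of $T(X)^{\mu}$ is a submonoid of $T\ppr(X)^{\mu}$; in particular $\varphi_X(\Sl(X))$ is a submonoid of $T\ppr(X)^{\mu}$. For any $f\in\Gs(X,Y)$, naturality of $\varphi$ with respect to the contravariant and the multiplicative covariant structures yields
\[ f^{\ast}(\varphi_Y(\Sl(Y)))=\varphi_X(f^{\ast}(\Sl(Y)))\subseteq\varphi_X(\Sl(X)), \]
\[ f_{\bullet}(\varphi_X(\Sl(X)))=\varphi_Y(f_{\bullet}(\Sl(X)))\subseteq\varphi_Y(\Sl(Y)), \]
using that $\Sl$ is closed under $f^{\ast}$ and $f_{\bullet}$. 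Thus $\varphi(\Sl)\subseteq T^{\prime\mu}$ is a semi-Mackey subfunctor, and tautologically $\varphi(\Sl)\subseteq\varphi(\Sl)$, so Proposition \ref{PropLocUniv} applied with $\Sl\ppr:=\varphi(\Sl)$ supplies the desired compatible morphism $\Sl^{-1}T\rightarrow(\varphi(\Sl))^{-1}T\ppr$.

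For part (2), I would argue dually. Each $\varphi_X^{-1}(\Sl\ppr(X))$ is a submonoid of $T(X)^{\mu}$, since $\varphi_X(1)=1\in\Sl\ppr(X)$ and $\varphi_X$ preserves products. For $f\in\Gs(X,Y)$ and $s\in\varphi_Y^{-1}(\Sl\ppr(Y))$, naturality gives
\[ \varphi_X(f^{\ast}(s))=f^{\ast}(\varphi_Y(s))\in f^{\ast}(\Sl\ppr(Y))\subseteq\Sl\ppr(X), \]
so $f^{\ast}(s)\in\varphi_X^{-1}(\Sl\ppr(X))$, and the same argument with $f_{\bullet}$ in place of $f^{\ast}$ handles the multiplicative transfer. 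Hence $\varphi^{-1}(\Sl\ppr)\subseteq T^{\mu}$ is a semi-Mackey subfunctor, and by construction $\varphi(\varphi^{-1}(\Sl\ppr))\subseteq\Sl\ppr$, so Proposition \ref{PropLocUniv} produces the morphism $(\varphi^{-1}(\Sl\ppr))^{-1}T\rightarrow\Sl^{\prime-1}T\ppr$ compatible with $\varphi$.

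There is no serious obstacle here: the argument is a purely formal application of naturality together with the universal property. The only mild care required is to perform the verification entirely within the multiplicative semi-Mackey structures $T^{\mu}$ and $T^{\prime\mu}$, since this is where $\Sl$ and $\Sl\ppr$ live; the additive transfers of $\varphi$ enter neither verification.
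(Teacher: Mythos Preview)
Your proposal is correct and matches the paper's approach exactly: the paper's proof consists of the single sentence ``This immediately follows from Proposition \ref{PropLocUniv}'', and you have simply filled in the straightforward verification (that $\varphi(\Sl)$ and $\varphi^{-1}(\Sl\ppr)$ are semi-Mackey subfunctors of the multiplicative parts) which the paper leaves implicit.
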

\begin{proof}
This immediately follows from Proposition \ref{PropLocUniv}.
\end{proof}

\begin{cor}\label{CorLLoc}
Let $T$ be a Tambara functor, and let $\Sl\subseteq\Sl\ppr\subseteq T$ be semi-Mackey subfunctors. Then the image $\bar{\Sl\ppr}=\ell_{\Sl}(\Sl\ppr)$ of $\Sl\ppr$ under the morphism $\ell_{\Sl}\colon T\rightarrow \Sl^{-1}T$ becomes a semi-Mackey subfunctor $\bar{\Sl\ppr}\subseteq \Sl^{-1}T$, and there is a natural isomorphism
\[ \Sl^{\prime -1}T\overset{\cong}{\longrightarrow}\bar{\Sl}^{-1}(\Sl^{-1}T) \]
compatible with $\ell_{\Sl\ppr}$ and $\ell_{\bar{\Sl}}\circ\ell_{\Sl}$.
\end{cor}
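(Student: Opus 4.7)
The plan is to deduce the corollary formally from the universal property (Corollary \ref{CorLocUniv}) together with the image construction (Corollary \ref{CorLocLoc}(1)). First, Corollary \ref{CorLocLoc}(1) applied to $\ell_{\Sl}\colon T\to\Sl^{-1}T$ and $\Sl\ppr\subseteq T^{\mu}$ immediately gives that $\bar{\Sl\ppr}=\ell_{\Sl}(\Sl\ppr)$ is a semi-Mackey subfunctor of $(\Sl^{-1}T)^{\mu}$, so the iterated localization $\bar{\Sl\ppr}^{-1}(\Sl^{-1}T)$ and the morphism $\ell_{\bar{\Sl\ppr}}$ are well-defined by Proposition \ref{PropTamLoc}.

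Next, I would construct the forward morphism $\Phi\colon\Sl^{\prime-1}T\to\bar{\Sl\ppr}^{-1}(\Sl^{-1}T)$. The composition $\ell_{\bar{\Sl\ppr}}\circ\ell_{\Sl}\colon T\to\bar{\Sl\ppr}^{-1}(\Sl^{-1}T)$ sends every $s\ppr\in\Sl\ppr(X)$ to an invertible element, because $\ell_{\Sl,X}(s\ppr)$ lies in $\bar{\Sl\ppr}(X)$ by definition and is therefore inverted by $\ell_{\bar{\Sl\ppr}}$. Hence Corollary \ref{CorLocUniv} (applied to the localization at $\Sl\ppr$) yields a unique $\Phi$ with $\Phi\circ\ell_{\Sl\ppr}=\ell_{\bar{\Sl\ppr}}\circ\ell_{\Sl}$.

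For the reverse morphism $\Psi\colon\bar{\Sl\ppr}^{-1}(\Sl^{-1}T)\to\Sl^{\prime-1}T$, I would use a two-step factorization. Since $\Sl\subseteq\Sl\ppr$, the morphism $\ell_{\Sl\ppr}\colon T\to\Sl^{\prime-1}T$ sends $\Sl$ into $(\Sl^{\prime-1}T)^{\times}$; Corollary \ref{CorLocUniv} thus produces a unique $\Psi_1\colon\Sl^{-1}T\to\Sl^{\prime-1}T$ with $\Psi_1\circ\ell_{\Sl}=\ell_{\Sl\ppr}$. By construction $\Psi_1$ carries $\bar{\Sl\ppr}=\ell_{\Sl}(\Sl\ppr)$ into $\ell_{\Sl\ppr}(\Sl\ppr)\subseteq(\Sl^{\prime-1}T)^{\times}$, so a second application of Corollary \ref{CorLocUniv} (now applied to the localization at $\bar{\Sl\ppr}$) provides a unique $\Psi$ with $\Psi\circ\ell_{\bar{\Sl\ppr}}=\Psi_1$, and therefore $\Psi\circ\ell_{\bar{\Sl\ppr}}\circ\ell_{\Sl}=\ell_{\Sl\ppr}$.

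Finally, the two composites $\Psi\circ\Phi$ and $\Phi\circ\Psi$ agree, respectively, with $\id_{\Sl^{\prime-1}T}$ and $\id_{\bar{\Sl\ppr}^{-1}(\Sl^{-1}T)}$ after precomposition with $\ell_{\Sl\ppr}$ and with $\ell_{\bar{\Sl\ppr}}\circ\ell_{\Sl}$; by the uniqueness clauses of Corollary \ref{CorLocUniv} these composites coincide with the identity morphisms, and the resulting isomorphism is tautologically compatible with $\ell_{\Sl\ppr}$ and $\ell_{\bar{\Sl\ppr}}\circ\ell_{\Sl}$. There is no real obstacle here beyond bookkeeping: the only point that needs care is to invoke the universal property twice in the right order and to verify at each stage that the relevant multiplicative subfunctor is mapped into invertible elements, which in each case is immediate from $\Sl\subseteq\Sl\ppr$ and from the definition of $\bar{\Sl\ppr}$.
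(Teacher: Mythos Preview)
Your argument is correct. The paper's own proof is a one-liner: it invokes Corollary~\ref{CorLocLoc} for the image statement and then appeals to ``an objectwise argument from ordinary commutative ring theory,'' i.e.\ the classical isomorphism $\Sl\ppr(X)^{-1}T(X)\cong\bar{\Sl\ppr}(X)^{-1}\bigl(\Sl(X)^{-1}T(X)\bigr)$ for each $X$, together with the observation that these objectwise isomorphisms assemble into a morphism of Tambara functors because all structure maps are induced from those of $T$.

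Your route is genuinely different in emphasis: rather than compute objectwise and then verify compatibility with restrictions, additive transfers and multiplicative transfers, you work entirely at the level of the universal property (Corollary~\ref{CorLocUniv}), building $\Phi$ and $\Psi$ by successive factorizations and proving they are mutually inverse via uniqueness. This buys you a cleaner argument with no need to touch the explicit formulas for $f_+$, $f_\bullet$ on localizations; the paper's approach, on the other hand, is shorter to state and leans on the reader's familiarity with the corresponding fact for ordinary rings. The one place where a reader might pause in your version is the last step, showing $\Phi\circ\Psi=\id$: here uniqueness must be applied twice (first to $\ell_{\bar{\Sl\ppr}}$, then to $\ell_{\Sl}$), which you acknowledge by writing ``uniqueness clauses'' in the plural, but it would not hurt to spell out that two-step reduction explicitly.
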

\begin{proof}
This immediately follows from Corollary \ref{CorLocLoc} and an objectwise argument from ordinary commutative ring theory.
\end{proof}

\section{Compatibility with the Tambarization}

In \cite{N_TamMack}, we constructed a functor ({\it Tambarization})
\[ \mathcal{T}\colon\SMackG\rightarrow\TamG, \]
which is left adjoint to the functor taking multiplicative parts
\[ (-)^{\mu}\colon\TamG\rightarrow\SMackG. \]

$\mathcal{T}$ is regarded as a $G$-bivariant analog of the monoid-ring functor
\[ \Mon\rightarrow\Ring\ ;\ Q\mapsto\mathbb{Z}[Q]. \]
In this view, we denote $\mathcal{T}(M)$ by $\Omega [M]$ for any $M\in\Ob(\SMackG)$.

For each $X\in\Ob(\Gs)$, by definition $(\Omega [M])(X)$ is the Grothendieck ring
\[ \Omega [M](X)=K_0(\MGs/X), \]
where $\MGs/X$ is the category defined as follows.
\begin{itemize}
\item[-] An object in $\MGs/X$ is a pair $(A\overset{p}{\rightarrow}X,m)$ of $(A\overset{p}{\rightarrow}X)\in\Ob(\Gs/X)$ and $m\in M(A)$.
\item[-] A morphism from $(A_1\overset{p_1}{\rightarrow}X,m_1)$ to $(A_2\overset{p_2}{\rightarrow}X,m_2)$ is a morphism $f\in\Gs(A_1,A_2)$ satisfying $p_2\circ f=p_1$ and $M^{\ast}(f)(m_2)=m_1$.
\item[-] Sum of $(A_1\overset{p_1}{\rightarrow}X,m_1)$ and $(A_2\overset{p_2}{\rightarrow}X,m_2)$ is
\[ (A_1\amalg A_2\overset{p_1\cup p_2}{\longrightarrow}X,m_1\amalg m_2), \]
where $m_1\amalg m_2$ is the element in $M(A_1\amalg A_2)$ corresponding to $(m_1,m_2)$ under the natural isomorphism $M(A_1\amalg A_2)\cong M(A_1)\times M(A_2)$.
\item[-] Product of $(A_1\overset{p_1}{\rightarrow}X,m_1)$ and $(A_2\overset{p_2}{\rightarrow}X,m_2)$ is $(A\overset{p}{\rightarrow}X,m_1\star m_2)$, where
\[
\xy
(-6,6)*+{A}="0";
(6,6)*+{A_2}="2";
(-6,-6)*+{A_1}="4";
(6,-6)*+{X}="6";
(0,0)*+{\square}="7";
{\ar^{\varpi_2} "0";"2"};
{\ar_{\varpi_1} "0";"4"};
{\ar^{p_2} "2";"6"};
{\ar_{p_1} "4";"6"};
\endxy
\]
is a pullback diagram, and 
\begin{eqnarray*}
&p=p_1\circ\varpi_1=p_2\circ\varpi_2,&\\
&m_1\star m_2=\varpi^{\ast}(m_1)\cdot\varpi_2^{\ast}(m_2).&
\end{eqnarray*}
\end{itemize}

We denote the equivalence class of $(A\overset{p}{\rightarrow}X,m)$ in $\Omega [M](X)$ by $[A\overset{p}{\rightarrow}X,m]$. Any element in $\Omega [M](X)$ can be written in the form of
\[ [A_1\overset{p_1}{\rightarrow}X,m_1]-[A_2\overset{p_2}{\rightarrow}X,m_2] \]
for some $(A_1\overset{p_1}{\rightarrow}X,m_1),\, (A_2\overset{p_2}{\rightarrow}X,m_2)\in\Ob(\MGs/X)$.

\begin{rem}\label{RemHist}
This kind of construction seems to be firstly done by Jacobson in \cite{Jacobson}, and later by Hartmann and Yal\c{c}\i n in \cite{H-R}, to obtain a Green functor from a monoid-valued additive contravariant functor.

Recently this construction was utilized to obtain a Tambara functor from a semi-Mackey functor in \cite{N_TamMack}. This can be also regarded as a generalization of crossed Burnside Tambara functors considered in \cite{O-Y3}.
\end{rem}

For the later use, we briefly recall the construction of the adjoint isomorphism
\begin{eqnarray*}
\TamG(\Omega [M],T)&\cong&\SMackG(M,T^{\mu})\\
\varphi&\leftrightarrow&\vartheta,
\end{eqnarray*}
for each $M\in\Ob(\SMackG)$, $T\in\Ob(\TamG)$. (Theorem 2.15 in \cite{N_TamMack}.)

For any $\varphi\in\TamG(\Omega [M],T)$, the corresponding $\vartheta$ is given by
\begin{eqnarray*}
\vartheta_X\ \colon\ M(X)&\rightarrow&T^{\mu}(X)\\
m&\mapsto&\varphi_X([X\overset{\id_X}{\rightarrow}X,m])
\end{eqnarray*}
for each $X\in\Ob(\Gs)$.

For any $\vartheta\in\SMackG(M,T^{\mu})$, the corresponding $\varphi$ is given by
\begin{eqnarray*}
\varphi_X\ \colon\ \Omega [M](X)&\rightarrow&T(X)\\
{[}A_1\overset{p_1}{\rightarrow}X,m_1{]}-{[}A_2\overset{p_2}{\rightarrow}X,m_2{]}&\mapsto&T_+(p_1)\circ\vartheta_{A_1}(m_1)-T_+(p_2)\circ\vartheta_{A_2}(m_2)
\end{eqnarray*}
for each $X\in\Ob(\Gs)$.

From this, for any semi-Mackey functor $M\in\Ob(\SMackG)$, the adjunction morphism
\[ \varepsilon\colon M\rightarrow\Omega [M]^{\mu} \]
corresponding to $\id_{\Omega [M]}$ is given by
\begin{eqnarray}
\label{EqAdjunction}\varepsilon_X\ \colon\ M(X)&\rightarrow&\Omega [M](X)\\
\nonumber m&\mapsto&[X\overset{\id_X}{\rightarrow}X,m]
\end{eqnarray}
for each $X\in\Ob(\Gs)$.
Remark that, for any $\varphi\in \TamG(\Omega [M],T)$ and corresponding $\vartheta\in\SMackG(M,T^{\mu})$, we have
\begin{equation}\label{EqVarEp}
\varphi\circ\varepsilon=\vartheta.
\end{equation}

By $(\ref{EqAdjunction})$, it is shown that $\varepsilon_X$ is monomorphic for any $X$, and thus $M$ can be regarded as a semi-Mackey subfunctor $M\subseteq\Omega [M]^{\mu}$ through $\varepsilon$. Thus if we are given a semi-Mackey subfunctor $\Sl\subseteq M$, we can localize $\Omega [M]$ by $\varepsilon(\Sl)$.
We denote the fraction $\varepsilon(\Sl)^{-1}(\Omega [M])$ simply by $\Sl^{-1}(\Omega [M])$
. 

\begin{prop}\label{PropLocOmegaM}
Let $M$ be a semi-Mackey functor on $G$, and let $\Sl\subseteq M$ be a semi-Mackey subfunctor. We have a natural isomorphism of Tambara functors
\[ \Sl^{-1}(\Omega [M])\cong \Omega [\Sl^{-1}M]. \]
\end{prop}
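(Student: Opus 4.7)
The plan is to prove this by the Yoneda lemma: I will show that both $\Sl^{-1}(\Omega[M])$ and $\Omega[\Sl^{-1}M]$ represent the same set-valued functor on $\TamG$, naturally in the argument $T$.

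Given any Tambara functor $T$, Corollary \ref{CorLocUniv} first identifies
\[
\TamG(\Sl^{-1}(\Omega[M]), T) \cong \{\varphi \in \TamG(\Omega[M], T) \mid \varphi(\varepsilon(\Sl)) \subseteq T^{\times}\}.
\]
Composing with $\varepsilon$ and invoking the Tambarization adjunction $\TamG(\Omega[M], T) \cong \SMackG(M, T^{\mu})$ of \cite{N_TamMack} together with (\ref{EqVarEp}), this set is put into natural bijection with
\[
\{\vartheta \in \SMackG(M, T^{\mu}) \mid \vartheta(\Sl) \subseteq T^{\mu\times}\},
\]
where we use that the multiplicatively invertible elements of $T^{\mu}(X)$ are exactly the ring units $T(X)^{\times}$ for every $X \in \Ob(\Gs)$.

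On the other side, the Tambarization adjunction followed by Proposition \ref{PropSemiMackLoc}(3) applied to $\Sl \subseteq M$ gives
\[
\TamG(\Omega[\Sl^{-1}M], T) \cong \SMackG(\Sl^{-1}M, T^{\mu}) \cong \{\vartheta \in \SMackG(M, T^{\mu}) \mid \vartheta(\Sl) \subseteq T^{\mu\times}\},
\]
which is the same set. Since each step in both chains of identifications is natural in $T$, Yoneda then yields a (necessarily unique) isomorphism $\Sl^{-1}(\Omega[M]) \cong \Omega[\Sl^{-1}M]$ of Tambara functors.

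Explicitly, one realization of the map $\Omega[\Sl^{-1}M] \to \Sl^{-1}(\Omega[M])$ goes as follows: the composition $\ell_{\varepsilon(\Sl)} \circ \varepsilon \colon M \to (\Sl^{-1}(\Omega[M]))^{\mu}$ carries $\Sl$ into the units of $\Sl^{-1}(\Omega[M])$, so by the universal property of $\ell_{\Sl}$ (Proposition \ref{PropSemiMackLoc}(3)) it factors uniquely through $\ell_{\Sl} \colon M \to \Sl^{-1}M$, and the Tambarization adjunction promotes this factorization to the desired morphism. The inverse map arises symmetrically from $\ell_{\Sl} \colon M \to \Sl^{-1}M$, noting that $\varepsilon(\Sl) \subseteq \Omega[\Sl^{-1}M]^{\mu}$ already lies in the units of $\Omega[\Sl^{-1}M]$ once we pass through $\Omega[\ell_{\Sl}]$. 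The only technicality worth care is the identification of units between $T$ and $T^{\mu}$ as it threads through the two adjunctions; once this is in place, everything else is formal Yoneda bookkeeping.
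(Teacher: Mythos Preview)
Your argument is correct and follows essentially the same route as the paper: both proofs establish, for every Tambara functor $T$, the same chain of natural bijections
\[
\TamG(\Sl^{-1}(\Omega[M]),T)\ \cong\ \{\vartheta\in\SMackG(M,T^{\mu})\mid \vartheta(\Sl)\subseteq T^{\times}\}\ \cong\ \TamG(\Omega[\Sl^{-1}M],T)
\]
via Corollary~\ref{CorLocUniv}, the Tambarization adjunction with (\ref{EqVarEp}), and Proposition~\ref{PropSemiMackLoc}(3), and then invoke Yoneda. Your additional paragraph making the comparison map explicit is not in the paper, but it is a harmless elaboration rather than a different method.
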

\begin{proof}
It suffices to construct a natural bijection
\[ \TamG(\Sl^{-1}(\Omega [M]),T)\cong\TamG(\Omega [\Sl^{-1}M],T) \]
for each $T\in\Ob(\TamG)$.
This is obtained from
\begin{eqnarray*}
\TamG(\Omega [\Sl^{-1}M],T)&\cong&\SMackG(\Sl^{-1}M,T^{\mu})\\
&\cong&\{ \vartheta\in \SMackG(M,T^{\mu})\mid \vartheta(\Sl)\subseteq (T^{\mu})^{\times}=T^{\times} \}
\end{eqnarray*}
and
\begin{eqnarray*}
\TamG(\Sl^{-1}(\Omega [M]),T)&=&\TamG(\varepsilon(\Sl)^{-1}(\Omega [M]),T)\\
&\cong&\{ \varphi\in\TamG(\Omega [M],T)\mid \varphi(\varepsilon(\Sl))\subseteq T^{\times} \} \\
&\cong&\{ \vartheta\in \SMackG(M,T^{\mu})\mid \vartheta(\Sl)\subseteq T^{\times} \}.
\end{eqnarray*}

%
%
\end{proof}

\section{Compatibility with ideal quotients}

In \cite{N_IdealTam}, an ideal of a Tambara functor $T$ was defined as follows.
\begin{dfn}\label{DefIdeal}
Let $T$ be a Tambara functor. An {\it ideal} $\I$ of $T$ is a family of ideals $\I(X)\subseteq T(X)$ $({}^{\forall}X\in\Ob(\Gs))$ satisfying
\begin{enumerate}
\item[{\rm (i)}] $f^{\ast}(\I(Y))\subseteq \I(X)$,
\item[{\rm (ii)}] $f_+(\I(X))\subseteq \I(Y)$,
\item[{\rm (iii)}] $f_{\bullet}(\I(X))\subseteq f_{\bullet}(0)+\I(Y)$
\end{enumerate}
for any $f\in\Gs(X,Y)$. 
\end{dfn}

As shown in \cite{N_IdealTam}, for any ideal $\I\subseteq T$, the quotient
\[ (T/\I)(X)=T(X)/\I(X)\quad(X\in\Ob(\Gs)) \]
forms a Tambara functor $T/\I$, and the projections
\[ p_X\colon T(X)\rightarrow T(X)/\I(X) \quad(X\in\Ob(\Gs)) \]
forms a morphism of Tambara functors $p\colon T\rightarrow T/\I$.

The following gives some examples of ideals (\cite{N_IdealTam}).
\begin{ex}\label{ExIM}
Let $T$ be a Tambara functor, and $I\subseteq T(G/e)$ be a $G$-invariant ideal of $T(G/e)$. 
For each $X\in\Ob(\Gs)$, define $\I_I(X)$ by
\begin{equation}
\label{EqIM}
\I_I(X)=\underset{\gamma\in\Gs(G/e,X)}{\bigcap}(\gamma^{\ast})^{-1}(I).
\end{equation}

Then $\I_I\subseteq T$ becomes an ideal of $T$, which is the maximum one among  ideals $\I$ satisfying $\I(G/e)=I$.
\end{ex}

\begin{rem}\label{RemIS}
Let $T$ be a Tambara functor. Let $I\subseteq T(G/e)$ be a $G$-invariant ideal and let $S\subseteq T(G/e)^{\mu}$ be a saturated $G$-invariant submonoid. 
For any ideal $\I\subseteq T$ satisfying $\I(G/e)=I$ and any semi-Mackey subfunctor $\Sl\subseteq T$ satisfying $\Sl(G/e)=S$, the following are equivalent.

\begin{enumerate}
\item $I\cap S=\emptyset$.
\item $\I\cap \Sl=\emptyset$. Namely, $\I(X)\cap\Sl(X)=\emptyset$ for any non-empty $X\in\Ob(\Gs)$.
\end{enumerate}
\end{rem}
\begin{proof}
Obviously {\rm (2)} implies {\rm (1)}. Conversely, assume {\rm (1)} holds. Then, for any $X\in\Ob(\Gs)$ and $\gamma\in\Gs(G/e,X)$, since
\[ \gamma^{\ast}(\I(X))\subseteq I\quad\text{and}\quad \gamma^{\ast}(\Sl(X))\subseteq S, \]
we obtain
\[ \I(X)\cap\Sl(X)\subseteq (\gamma^{\ast})^{-1}(I\cap S)=\emptyset. \]
\end{proof}

\begin{prop}\label{PropIS}
Let $T$ be a Tambara functor. Let $\I\subseteq T$ be an ideal and $\Sl\subseteq T^{\mu}$ be a semi-Mackey subfunctor, satisfying $\I\cap\Sl=\emptyset$.
\begin{enumerate}
\item If we define $\Sl^{-1}\I\subseteq\Sl^{-1}T$ by
\[ \Sl^{-1}\I(X)=\{ \alpha\in\Sl^{-1}T(X)\mid \alpha=\frac{x}{s}\ \ \text{for some}\ x\in\I(X), s\in\Sl(X) \} \]
for each $X\in\Ob(\Gs)$, then $\Sl^{-1}\I$ becomes an ideal of $\Sl^{-1}T$.
\item Let $p\colon T\rightarrow T/\I$ be the projection, and put $\bar{\Sl}=p(\Sl)$. Then we have a natural isomorphism of Tambara functors
\[ \upsilon\colon\bar{\Sl}^{-1}(T/\I)\overset{\cong}{\longrightarrow}\Sl^{-1}T/\Sl^{-1}\I, \]
compatible with projections.
\begin{equation}
\label{DiagIS}
\xy
(-11,10)*+{T}="0";
(5,10)*+{T/\I}="2";
(-11,-10)*+{\Sl^{-1}T}="4";
(16,8)*+{}="5";
(24,-10)*+{\Sl^{-1}T/\Sl^{-1}\I}="6";
(24,2)*+{\bar{\Sl}^{-1}(T/\I)}="8";
{\ar^{p} "0";"2"};
{\ar_{\ell_{\Sl}} "0";"4"};
{\ar^{\ell_{\bar{\Sl}}} "2";"8"};
{\ar^{} "4";"6"};
{\ar_{\cong}^{\upsilon} "8";"6"};
{\ar@{}|\circlearrowright "4";"5"};
\endxy
\end{equation}
\end{enumerate}
\end{prop}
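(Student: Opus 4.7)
The approach is to reduce both parts to ordinary commutative ring theory on each object $X \in \Ob(\Gs)$, by invoking the universal property of Corollary \ref{CorLocUniv} at key moments and by exploiting the observation that kernels of Tambara functor morphisms are Tambara ideals.

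First I would form the composite $\Phi = \ell_{\bar{\Sl}} \circ p \colon T \to T/\I \to \bar{\Sl}^{-1}(T/\I)$, which is a morphism of Tambara functors. For any $s \in \Sl(X)$, the image $\Phi_X(s) = \ell_{\bar{\Sl},X}(\bar{s})$ is a unit of $\bar{\Sl}^{-1}(T/\I)(X)$, so by Corollary \ref{CorLocUniv} there is a unique factorization $\Phi = \tilde{\Phi} \circ \ell_{\Sl}$ with $\tilde{\Phi} \colon \Sl^{-1}T \to \bar{\Sl}^{-1}(T/\I)$.

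For part (1), I would first record the lemma that for any morphism $\psi \colon T_1 \to T_2$ of Tambara functors, the family $\K(X) := \ker(\psi_X)$ is an ideal of $T_1$ in the sense of Definition \ref{DefIdeal}. Conditions (i) and (ii) are immediate from naturality of $\psi$ with respect to $f^{\ast}$ and $f_+$; for condition (iii), naturality with respect to $f_{\bullet}$ gives
\[
\psi_Y(f_{\bullet}(x) - f_{\bullet}(0)) = f_{\bullet}(\psi_X(x)) - f_{\bullet}(\psi_X(0)) = 0
\]
whenever $x \in \K(X)$, so $f_{\bullet}(x) \in f_{\bullet}(0) + \K(Y)$. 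Applying this to $\tilde{\Phi}$, it then suffices to identify $\ker(\tilde{\Phi}_X)$ with $\Sl^{-1}\I(X)$ at each $X$; but this is the classical statement that the kernel of $S^{-1}R \to \bar{S}^{-1}(R/I)$ coincides with the extended ideal $\{ x/s \mid x \in I,\, s \in S \}$, applied to $R = T(X)$, $I = \I(X)$, $S = \Sl(X)$ (the hypothesis $\I \cap \Sl = \emptyset$ ensures this does not trivialize). This simultaneously exhibits $\Sl^{-1}\I(X)$ as an ideal of $\Sl^{-1}T(X)$ and verifies the three Tambara-ideal conditions, completing part (1). A direct verification of condition (iii) is possible in principle but awkward because $f_{\bullet}(0)$ need not vanish; the kernel route sidesteps this complication.

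For part (2), since $\Sl^{-1}\I = \ker(\tilde{\Phi})$ by part (1), the morphism $\tilde{\Phi}$ descends to an injective morphism $\bar{\tilde{\Phi}} \colon \Sl^{-1}T/\Sl^{-1}\I \to \bar{\Sl}^{-1}(T/\I)$ of Tambara functors. Pointwise surjectivity at each $X$ is the other half of the classical isomorphism $S^{-1}R/S^{-1}I \cong \bar{S}^{-1}(R/I)$, so $\bar{\tilde{\Phi}}$ is an isomorphism, and I would set $\upsilon := (\bar{\tilde{\Phi}})^{-1}$. Compatibility with the projections in diagram (\ref{DiagIS}) is built into the construction, since $\tilde{\Phi} \circ \ell_{\Sl} = \ell_{\bar{\Sl}} \circ p$ by the defining property of $\tilde{\Phi}$, and the remaining triangle commutes by the universal properties of the quotient by an ideal and of the localization.

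The step I expect to require the most care is the verification of condition (iii) for kernels of Tambara functor morphisms, since this is the distinctive non-additive feature of Tambara ideals and has no direct counterpart for ordinary ring-morphism kernels. Once that lemma is in hand, the remainder is essentially formal, combining Corollary \ref{CorLocUniv} with the ordinary localization and quotient theory of commutative rings.
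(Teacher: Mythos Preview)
Your proposal is correct and takes a genuinely different route from the paper.

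The paper proves part~(1) by direct verification: it checks that $\Sl^{-1}\I(X)$ is an ideal of $\Sl^{-1}T(X)$ by ordinary ring theory, and then verifies conditions (i)--(iii) of Definition~\ref{DefIdeal} by explicit elementwise computation. In particular, condition~(iii) is handled via the one-line identity
\[
f_{\bullet}\Bigl(\frac{x}{s}\Bigr)-f_{\bullet}(0)=\frac{f_{\bullet}(x)-f_{\bullet}(s)f_{\bullet}(0)}{f_{\bullet}(s)}=\frac{f_{\bullet}(x)-f_{\bullet}(0)}{f_{\bullet}(s)}\in\Sl^{-1}\I(Y),
\]
using $f_{\bullet}(s)f_{\bullet}(0)=f_{\bullet}(s\cdot 0)=f_{\bullet}(0)$ and condition~(iii) for $\I$; so the direct check is less awkward than you anticipate. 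For part~(2) the paper writes down the classical ring isomorphism $\upsilon_X$ at each object and then observes that it is compatible with the induced structure morphisms.

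Your approach instead builds the Tambara morphism $\tilde{\Phi}\colon\Sl^{-1}T\to\bar{\Sl}^{-1}(T/\I)$ first via Corollary~\ref{CorLocUniv}, proves the general lemma that kernels of Tambara morphisms are ideals, and then identifies $\Sl^{-1}\I$ with $\ker\tilde{\Phi}$ objectwise. This is more conceptual: the ideal conditions come for free from the kernel lemma, and part~(2) becomes a first-isomorphism-theorem argument. The trade-off is that you are implicitly invoking the universal property of the Tambara quotient $\Sl^{-1}T/\Sl^{-1}\I$ (that a morphism vanishing on the ideal descends), which is true but not stated in this paper; it follows from the pointwise universal property together with the fact that the quotient's structure morphisms are induced from those of $\Sl^{-1}T$. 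Both arguments are short; yours packages more into reusable lemmas, while the paper's keeps everything self-contained.
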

\begin{proof}
By the ordinary ideal theory for rings, $\Sl^{-1}\I(X)\subseteq \Sl^{-1}T(X)$ becomes an ideal for each $X\in\Ob(\Gs)$. Thus it suffices to show
\[ f^{\ast}(\Sl^{-1}\I(Y))\subseteq \Sl^{-1}\I(X),\quad f_+(\Sl^{-1}\I(X))\subseteq \Sl^{-1}\I(Y), \]
and
\[ f_{\bullet}(\Sl^{-1}\I(X))\subseteq f_{\bullet}(0)+\Sl^{-1}\I(Y). \]

Let $f\in\Gs(X,Y)$ be any morphism. For any $y\in\I(Y)$ and $t\in\Sl(Y)$, we have $f^{\ast}(\frac{y}{t})=\frac{f^{\ast}(y)}{f^{\ast}(t)}\in \Sl^{-1}\I(X)$, and thus
\[ f^{\ast}(\Sl^{-1}\I(Y))\subseteq \Sl^{-1}\I(X). \]
For any $x\in\I(X)$ and $s\in\Sl(X)$, if we take $a\in T(X)$ satisfying $as=f^{\ast}f_{\bullet}(s)$, then we have $f_+(\frac{x}{s})=\frac{f_+(ax)}{f_{\bullet}(s)}\in \Sl^{-1}\I(Y)$, and thus
\[ f_+(\Sl^{-1}\I(X))\subseteq \Sl^{-1}\I(Y). \]
Besides, by $f_{\bullet}(\frac{x}{s})-f_{\bullet}(0)=\frac{f_{\bullet}(x)-f_{\bullet}(s)f_{\bullet}(0)}{f_{\bullet}(s)}=\frac{f_{\bullet}(x)-f_{\bullet}(0)}{f_{\bullet}(s)}\in \Sl^{-1}\I(Y)$, we obtain
\[ f_{\bullet}(\Sl^{-1}\I(X))\subseteq f_{\bullet}(0)+\Sl^{-1}\I(Y) \]
for any $f\in\Gs(X,Y)$.
Thus $\Sl^{-1}\I\subseteq \Sl^{-1}T$ becomes an ideal.

\smallskip

By the ordinary ideal theory for rings, for any $X\in\Ob(\Gs)$, there is a ring isomorphism
\begin{eqnarray*}
\upsilon_X\colon\Sl^{-1}T/\Sl^{-1}\I (X)&\overset{\cong}{\longrightarrow}&\bar{\Sl}^{-1}(T/\I)(X)\\
\frac{x}{s}\ +\Sl^{-1}\I(X)&\mapsto&\frac{p(x)}{p(s)}\qquad({}^{\forall} \frac{x}{s}\in\Sl^{-1}T(X)),
\end{eqnarray*}
which makes $(\ref{DiagIS})$ commutative at $X$.
Since the structure morphisms of $\bar{\Sl}^{-1}(T/\I)$ and $\Sl^{-1}T/\Sl^{-1}\I$ are those induced from $T$, we can check $\upsilon=\{ \upsilon_X\}_{X\in\Ob(\Gs)}$ becomes an isomorphism of Tambara functors.
\end{proof}

\section{Fraction and field-like Tambara functors}

As in \cite{N_IdealTam}, we say a Tambara functor $T$ is {\it field-like} if the zero ideal $(0)\subsetneq T$ is maximal with respect to the inclusion. 
In this section, we consider fractions by the following semi-Mackey subfunctor, and investigate the relations between field-like Tambara functors.

\begin{ex}\label{ExZ}
Let $T$ be a Tambara functor. If we put
\[ \Z=\{ s\in T(G/e)\mid s\ \text{is not a zero divisor}\} , \]
then we obtain two semi-Mackey subfunctors $\Ll_{\Z}\subseteq T^{\mu}$ and $\Ul_{\Z}\subseteq T^{\mu}$.
\end{ex}

We introduce the following condition from \cite{N_IdealTam}.
\begin{dfn}\label{DefMRC}
A Tambara functor $T$ is said to satisfy {\rm (MRC)} if,
for any $f\in\Gs(X,Y)$ between transitive $X,Y\in\Ob(\Gs)$, the restriction $f^{\ast}$ is monomorphic. Remark that we may assume $X=G/e$.
\end{dfn}

\begin{rem}\label{RemMRCQuot}
Let $T$ be a Tambara functor and $\I_{(0)}\subseteq T$ be the ideal corresponding to $(0)\subseteq T(G/e)$ as in Example \ref{ExIM}. If we define $T_{\mathrm{MRC}}$ by $T_{\mathrm{MRC}}=T/\I_{(0)}$, then $T_{\mathrm{MRC}}$ satisfies {\rm (MRC)}. Besides, $T$ satisfies {\rm (MRC)} if and only if $T=T_{\mathrm{MRC}}$.
\end{rem}

\begin{fact}\label{FactMRC}(Theorem 4.21 in \cite{N_IdealTam})
A Tambara functor satisfies {\rm (MRC)} if and only if $T$ is a Tambara subfunctor of $\mathcal{P}_{T(G/e)}$.
\end{fact}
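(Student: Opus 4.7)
The plan is to construct a canonical morphism of Tambara functors $\Phi\colon T\to\mathcal{P}_{T(G/e)}$ and show that its components are injective precisely when $T$ satisfies (MRC); the conclusion then follows since a Tambara subfunctor inclusion is the same data as an objectwise-injective morphism of Tambara functors. For each $X\in\Ob(\Gs)$, I would define $\Phi_X\colon T(X)\to\mathcal{P}_{T(G/e)}(X)$ by $\Phi_X(x)(y)=\gamma_y^{\ast}(x)$, where $\gamma_y\in\Gs(G/e,X)$ is the unique $G$-map sending $e$ to $y$. A short verification using $\gamma_{gy}=\gamma_y\circ R_g$, where $R_g\colon G/e\to G/e$ denotes right multiplication by $g$, shows $\Phi_X(x)$ is $G$-equivariant.

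To upgrade $\Phi$ to a morphism of Tambara functors, compatibility with restriction is immediate from $(f\circ\gamma_y)^{\ast}=\gamma_{f(y)}^{\ast}$. Compatibility with $f_+$ and $f_{\bullet}$ would be deduced from the Mackey condition and the distributive law, respectively, applied to the pullback (resp.\ exponential) diagram obtained by base changing $f\colon X\to Y$ along each $\gamma_y\colon G/e\to Y$. Since $G/e$ is free, this pullback splits as a disjoint union of copies of $G/e$ indexed by $f^{-1}(y)$, with component projections $\gamma_z\colon G/e\to X$; thus the Mackey condition reads $\gamma_y^{\ast}f_+(x)=\sum_{z\in f^{-1}(y)}\gamma_z^{\ast}(x)$, which matches the pointwise sum defining $f_+$ on $\mathcal{P}_{T(G/e)}$, and similarly for $f_{\bullet}$ via the distributive law.

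For injectivity, I would reduce to the transitive case using the orbit decomposition and the product isomorphisms $T(X)\cong\prod_i T(X_i)$, $\mathcal{P}_{T(G/e)}(X)\cong\prod_i\mathcal{P}_{T(G/e)}(X_i)$. For transitive $X$, any two maps $\gamma,\gamma'\in\Gs(G/e,X)$ are related by $\gamma'=\gamma\circ R_g$ for some $g$, so their restrictions differ by the ring automorphism $R_g^{\ast}$ of $T(G/e)$; hence $\Phi_X$ is injective on that orbit if and only if a single restriction $\gamma^{\ast}\colon T(X)\to T(G/e)$ is, which is exactly (MRC). Combined with the observation that $\mathcal{P}_{T(G/e)}$ itself satisfies (MRC) (for transitive $X$ and $\gamma\in\Gs(G/e,X)$, the restriction $\gamma^{\ast}$ is evaluation of a $G$-equivariant function at the single point $\gamma(e)$, hence injective on a transitive $G$-set), this yields the stated equivalence. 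I expect the main obstacle to be the careful diagrammatic setup needed for the multiplicative transfer compatibility via the distributive law; once the correct exponential diagram is identified, everything reduces to routine $G$-set computations.
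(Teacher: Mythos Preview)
The paper does not actually prove this statement; it is quoted as a fact from \cite{N_IdealTam} (Theorem 4.21 there), so there is no proof in the present paper to compare against. Your proposal is the standard argument for this result and is essentially correct; it is very likely the argument given in \cite{N_IdealTam}.

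One small correction: the compatibility of $\Phi$ with the multiplicative transfer $f_{\bullet}$ does not require the distributive law. It follows from the Mackey condition for the semi-Mackey functor $T^{\mu}=(T^{\ast},T_{\bullet})$, applied to the very same pullback of $f$ along $\gamma_y\colon G/e\to Y$ that you used for $f_+$; this yields $\gamma_y^{\ast}f_{\bullet}(x)=\prod_{z\in f^{-1}(y)}\gamma_z^{\ast}(x)$, matching the pointwise product defining $f_{\bullet}$ on $\mathcal{P}_{T(G/e)}$. The distributive law relates $f_+$ and $f_{\bullet}$ to each other and is not needed when checking compatibility with each structure map separately. With that adjustment, your outline goes through.
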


\begin{fact}\label{FactField}(Theorem 4.32 in \cite{N_IdealTam})
For any Tambara functor $T\ne 0$, the following are equivalent.
\begin{enumerate}
\item $T$ is field-like.
\item $T$ satisfies {\rm (MRC)}, and $T(G/e)$ has no non-trivial $G$-invariant ideal.
\end{enumerate}
\end{fact}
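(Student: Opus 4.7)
The plan is to prove the two implications $(1) \Rightarrow (2)$ and $(2) \Rightarrow (1)$ separately, exploiting the correspondence between $G$-invariant ideals of $T(G/e)$ and ideals of $T$ provided by Example \ref{ExIM}, together with the characterisation of (MRC) in Remark \ref{RemMRCQuot}.

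For $(1) \Rightarrow (2)$, assume $T$ is field-like. Apply Example \ref{ExIM} with $I=(0)$ to obtain the ideal $\I_{(0)} \subseteq T$ satisfying $\I_{(0)}(G/e)=(0)$. Since $T \ne 0$ forces $1 \ne 0$ in every $T(X)$ (because each restriction from $T(G/G)$ is a unital ring homomorphism), $\I_{(0)}$ is a proper ideal, so field-likeness gives $\I_{(0)}=0$; this is exactly (MRC) by Remark \ref{RemMRCQuot}. Next, for any $G$-invariant ideal $I \subseteq T(G/e)$, the associated ideal $\I_I \subseteq T$ must be $(0)$ or $T$ by field-likeness, and since $\I_I(G/e)=I$ by Example \ref{ExIM}, we conclude $I=(0)$ or $I=T(G/e)$.

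For $(2) \Rightarrow (1)$, let $\I \subseteq T$ be any ideal. The value $\I(G/e) \subseteq T(G/e)$ is a $G$-invariant ideal, hence equals $(0)$ or $T(G/e)$ by hypothesis. In the first case, the maximality property of Example \ref{ExIM} gives $\I \subseteq \I_{(0)}$, and (MRC) via Remark \ref{RemMRCQuot} forces $\I_{(0)}=0$, so $\I=0$. In the second case $1 \in \I(G/e)$, and I claim $\I=T$. By Fact \ref{FactMRC}, (MRC) embeds $T$ as a Tambara subfunctor of the fixed-point functor $\mathcal{P}_{T(G/e)}$, in which the multiplicative transfer along any morphism $f$ is the pointwise norm $f_\bullet(\phi)(y)=\prod_{x \in f^{-1}(y)} \phi(x)$. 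For any surjection $\gamma \colon G/e \to X$ onto a transitive $X$, this forces $\gamma_\bullet(0)=0$, so the ideal axiom $\gamma_\bullet(\I(G/e)) \subseteq \gamma_\bullet(0) + \I(X)$ applied with $\gamma_\bullet(1)=1$ yields $1 \in \I(X)$, whence $\I(X)=T(X)$ for every transitive $X$. The general case follows from the orbit decomposition $X = X_1 \amalg \cdots \amalg X_n$, the isomorphism $T(X) \cong \prod_i T(X_i)$, and the corresponding decomposition $\I(X) \cong \prod_i \I(X_i)$.

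The main obstacle is Case B of $(2) \Rightarrow (1)$: from $1 \in \I(G/e)$ one cannot lift to $1 \in \I(X)$ via the additive transfer alone (which produces $\gamma_+(1)$, typically a non-unit multiple of $1$), and the multiplicative transfer respects ideals only up to the correction term $\gamma_\bullet(0)$, which for a general Tambara functor need not vanish. The key observation is that this correction does vanish once one invokes Fact \ref{FactMRC}, because inside $\mathcal{P}_{T(G/e)}$ the transfer $\gamma_\bullet$ is literally the product over the non-empty fibres of a surjection $\gamma$.
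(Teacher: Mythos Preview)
The paper does not prove Fact~\ref{FactField} at all; it is quoted verbatim as Theorem~4.32 of \cite{N_IdealTam} and no argument is supplied here. So there is nothing in this paper to compare your proof against.

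Your proof is essentially correct, with one small slip in the justification. In $(1)\Rightarrow(2)$ you write that $T\ne0$ forces $1\ne0$ in every $T(X)$ ``because each restriction from $T(G/G)$ is a unital ring homomorphism''. Restriction alone does not give this: a unital ring homomorphism out of a nonzero ring may well land in the zero ring, so knowing $T(G/G)\ne0$ and that $\pt_X^{\ast}$ is unital does not exclude $T(X)=0$. What you actually need is $T(G/e)\ne0$ (so that $\I_{(0)}(G/e)=(0)\ne T(G/e)$ and $\I_{(0)}$ is proper), and this follows instead from the \emph{multiplicative} transfer: for the surjection $\pt\colon G/e\to G/G$ one has $\pt_\bullet(0)=0$ (apply the distributive law to $\emptyset\to G/e\overset{\pt}{\to}G/G$; the exponential object $\Pi_{\pt}\emptyset$ is empty since $\pt$ is surjective, so the right-hand side is $q_+$ out of $T(\emptyset)=0$), while $\pt_\bullet(1)=1$. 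Hence $T(G/e)=0$ would force $1=0$ in $T(G/G)$ and thus $T=0$. With this correction your $(1)\Rightarrow(2)$ is fine.

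Your $(2)\Rightarrow(1)$, including Case~B, is correct as written. It is worth noting that the appeal to Fact~\ref{FactMRC} there is convenient but not essential: the identity $\gamma_\bullet(0)=0$ for a surjection $\gamma$ holds in \emph{any} Tambara functor by the same exponential-diagram computation as above, so one does not need the embedding into $\mathcal{P}_{T(G/e)}$ to see it. Since you do have {\rm (MRC)} available in that direction, your route via Fact~\ref{FactMRC} is nonetheless legitimate.
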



First, we show that if $T$ is field-like itself, then nothing is changed under the fraction by $\Ul_{\Z}$. 
\begin{prop}
If $T$ is a field-like Tambara functor, then we have
\[ T^{\times}(G/e)=\Z=\{ s\in T(G/e)\mid \prod_{g\in G}gs\ne0 \}. \]
\end{prop}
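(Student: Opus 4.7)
The plan is to prove the chain $T^{\times}(G/e) \subseteq \Z \subseteq \{s \in T(G/e) \mid \prod_{g \in G}gs \neq 0\} \subseteq T^{\times}(G/e)$; the last inclusion is the only one that draws on the field-like hypothesis in a real way.

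The first two inclusions are essentially ring-theoretic. For $T^{\times}(G/e)\subseteq\Z$, an invertible $s$ with inverse $t$ satisfies $a=t(sa)=0$ whenever $sa=0$, so $s$ is a non-zero-divisor. For $\Z\subseteq\{s\mid \prod_g gs\ne 0\}$, if $s$ is a non-zero-divisor then so is every $G$-translate $gs$ (from $(gs)a=0$ we recover $s(g^{-1}a)=0$ by applying $g^{-1}$, hence $a=0$), and a product of non-zero-divisors is again a non-zero-divisor, hence nonzero provided $T(G/e)\ne 0$. This last nontriviality holds because $T\ne 0$ together with (MRC) rules out $T(G/e)=0$: otherwise the injective restrictions $T(G/H)\to T(G/e)$ would force $T(G/H)=0$ for every $H\le G$.

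The substantive inclusion $\{s\mid \prod_g gs\ne 0\}\subseteq T^{\times}(G/e)$ is where the field-like structure enters. Given $s$ with $u:=\prod_{g\in G}gs\ne 0$, the element $u$ is $G$-invariant, so the principal ideal $(u)\subseteq T(G/e)$ is automatically a $G$-invariant ideal. By Fact \ref{FactField}, since $T$ is field-like, $T(G/e)$ admits no non-trivial $G$-invariant ideal; combined with $u\ne 0$, this forces $(u)=T(G/e)$, so $u$ is a unit. Factoring $u=s\cdot\prod_{g\ne e}gs$ exhibits $s$ as a factor of a unit, whence $s$ itself is invertible. The conceptual heart of the argument is precisely this passage from $s$ to the $G$-norm $u$: it converts the non-vanishing hypothesis into the statement that a $G$-invariant principal ideal is nonzero, which is exactly the form in which Fact \ref{FactField} provides information. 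I do not anticipate any further obstacle beyond this observation.
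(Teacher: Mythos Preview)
Your proof is correct and follows essentially the same line as the paper's: both dispose of the easy inclusions $T^{\times}(G/e)\subseteq\Z\subseteq\{s\mid\prod_{g}gs\ne0\}$ and then use the field-like hypothesis to show that the $G$-invariant norm $u=\prod_{g}gs$ is a unit whenever it is nonzero, whence $s$ is a unit. The only cosmetic difference is that you invoke Fact~\ref{FactField} to argue with the $G$-invariant ring ideal $(u)\subseteq T(G/e)$, while the paper works with the Tambara ideal $\langle\tilde{s}\rangle$ generated by $\tilde{s}=u$ and identifies its component at $G/e$ as the ordinary principal ideal; the substance is the same.
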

\begin{proof}
For any $s\in T(G/e)$, put $\widetilde{s}=\underset{g\in G}{\prod}gs$.
Since we have
\[ T^{\times}(G/e)\subseteq\Z\subseteq\{ s\in T(G/e)\mid \widetilde{s}\ne0 \}, \]
it suffices to show
\[ \{ s\in T(G/e)\mid \widetilde{s}\ne0 \}\subseteq T^{\times}(G/e). \]

Take any $s\in \{ s\in T(G/e)\mid \widetilde{s}\ne0 \}$. Since $\widetilde{s}\ne0$ and $T(G/e)$ contains no non-trivial ideal, we have $\langle\widetilde{s}\rangle=T$. In particular we have $\langle\widetilde{s}\rangle(G/e)\ni 1$.

On the other hand, since $\widetilde{s}$ is $G$-invariant, it can be easily shown that we have
\[ \langle\widetilde{s}\rangle(G/e)=\{ r\ \!\!\widetilde{s}\mid r\in T(G/e) \}. \]
Thus there exists some $r\in T(G/e)$ such that $r\ \!\!\widetilde{s}=1$, which means $\widetilde{s}\in T^{\times}(G/e)$. Consequently we obtain $s\in T^{\times}(G/e)$.
\end{proof}

\begin{prop}\label{PropTrivLoc}
Let $T$ be a field-like Tambara functor. If $S\subseteq T(G/e)$ is a saturated $G$-invariant submonoid contained in $\Z$, then we have
$\Ll_S\subseteq\Ul_S\subseteq T^{\times}$.
\end{prop}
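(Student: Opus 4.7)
The inclusion $\Ll_S\subseteq\Ul_S$ is the content of the corollary following Proposition~\ref{PropSMax}, so the real work is to establish $\Ul_S\subseteq T^{\times}$. My plan is to reduce the problem to an assertion at the single object $G/G$, pass through a norm computation, and then invoke the fixed-point embedding provided by field-likeness.

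First I would note that $T^{\times}\subseteq T^{\mu}$ is a saturated semi-Mackey subfunctor, since any divisor of a unit is a unit. Combined with Proposition~\ref{PropMaxLoc}, Remark~\ref{Rem3Cond}, and the fact that $\pt_X^{\ast}\colon T(G/G)\to T(X)$ is a unit-preserving ring homomorphism, the desired containment $\Ul_S\subseteq T^{\times}$ reduces to proving $S_0\subseteq T(G/G)^{\times}$. Given $t\in S_0$, set $u=\pt_{G/e}^{\ast}(t)\in S$. The preceding proposition (which uses field-likeness of $T$) gives $S\subseteq\Z=T^{\times}(G/e)$, so $u$ has an inverse $v\in T(G/e)$. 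Applying the multiplicative transfer $(\pt_{G/e})_{\bullet}$ to $uv=1$ and using $(\pt_{G/e})_{\bullet}(1)=1$ yields
\[ (\pt_{G/e})_{\bullet}(u)\cdot(\pt_{G/e})_{\bullet}(v)=1 \]
in $T(G/G)$, so $(\pt_{G/e})_{\bullet}(u)$ is a unit there.

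The crux is then the identification $(\pt_{G/e})_{\bullet}(u)=t^{|G|}$ in $T(G/G)$: once this is in hand, invertibility of $t^{|G|}$ in the commutative ring $T(G/G)$ forces invertibility of $t$, with explicit inverse $t^{|G|-1}\cdot(\pt_{G/e})_{\bullet}(v)$. For this identification I would invoke Fact~\ref{FactField} (field-like implies \textrm{(MRC)}) together with Fact~\ref{FactMRC} to view $T$ as a Tambara subfunctor of $\mathcal{P}_{T(G/e)}$. Inside the fixed-point functor, $t\in T(G/G)$ corresponds to $u\in T(G/e)^G$ and the multiplicative transfer along $\pt_{G/e}$ is the norm $r\mapsto\prod_{g\in G}gr$; on the already $G$-invariant $u$ this equals $u^{|G|}$, which is the image of $t^{|G|}$. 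Injectivity of the embedding at each level gives $(\pt_{G/e})_{\bullet}(u)=t^{|G|}$ in $T(G/G)$.

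The main obstacle is precisely this final identification: intrinsically inside $T(G/G)$ it is not obvious how to express the composite $(\pt_{G/e})_{\bullet}\pt_{G/e}^{\ast}(t)$ as a polynomial in $t$, so the field-like hypothesis is used essentially twice --- first to upgrade $\Z$ to the set of units at $G/e$, and second to allow the norm computation to be imported from $\mathcal{P}_{T(G/e)}$.
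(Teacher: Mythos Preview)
Your proof is correct and follows essentially the same route as the paper. Both arguments reduce to showing $S_0=(\pt_{G/e}^{\ast})^{-1}(S)\subseteq T^{\times}(G/G)$, use the preceding proposition to know $S\subseteq T^{\times}(G/e)$, and invoke the embedding $T\hookrightarrow\mathcal{P}_{T(G/e)}$ (via field-like $\Rightarrow$ (MRC)) to compute $(\pt_{G/e})_{\bullet}\pt_{G/e}^{\ast}(t)=t^{|G|}$, from which invertibility of $t$ follows; the only difference is that you spell out the reduction step and the unit-of-the-norm step in more detail than the paper does.
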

\begin{proof}


Remark that $T$ is a Tambara subfunctor of a fixed point functor. Especially we have $(\pt_{G/e})_{\bullet}\pt_{G/e}^{\ast}(x)=x^{|G|}$ for any $x\in T(G/G)$. Thus if $x\in T(G/G)$ satisfies $\pt_{G/e}^{\ast}(x)\in S\ (\subseteq T^{\times}(G/e))$, then it satisfies $x\in T^{\times}(G/G)$. Namely we have
 \[ (\pt_{G/e}^{\ast})^{-1}(S)\subseteq T^{\times}(G/G). \]  
Thus it follows
\[ \Ul_S(X)\subseteq (\,\pt_{X}^{\ast}(T^{\times}(G/G))\,)^{\sim}\subseteq T^{\times}(X) \]
for any transitive $X\in\Ob(\Gs)$. Thus it follows $\Ul_S\subseteq T^{\times}$.
\end{proof}

\begin{cor}
For any field-like Tambara functor $T$, we have
\[ \Ul_{\Z}^{-1}T\cong\Ll_{\Z}^{-1}T\cong T. \]
\end{cor}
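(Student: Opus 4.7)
The plan is to reduce this corollary directly to Proposition \ref{PropTrivLoc} together with Remark \ref{RemTrivLoc}(1), after a brief verification that $\Z$ meets the hypotheses needed to apply the former.

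First I would check that the submonoid $\Z \subseteq T(G/e)^\mu$ is saturated and $G$-invariant. $G$-invariance is clear because $G$ acts on $T(G/e)$ by ring automorphisms, so $G$ preserves the property of being a non-zero-divisor. For saturation, if $ax = s \in \Z$ and $xy = 0$ for some $y$, then $sy = axy = 0$, forcing $y = 0$; hence $x \in \Z$. Thus $\Z$ is a saturated $G$-invariant submonoid of $T(G/e)^\mu$, and the hypothesis $S \subseteq \Z$ of Proposition \ref{PropTrivLoc} holds trivially with $S = \Z$.

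Next I would apply Proposition \ref{PropTrivLoc} directly to obtain $\Ll_{\Z} \subseteq \Ul_{\Z} \subseteq T^{\times}$. Once this chain of inclusions of semi-Mackey subfunctors of $T^\mu$ is in hand, Remark \ref{RemTrivLoc}(1) immediately yields that both of the canonical morphisms
\[ \ell_{\Ll_{\Z}}\colon T \longrightarrow \Ll_{\Z}^{-1}T \quad\text{and}\quad \ell_{\Ul_{\Z}}\colon T \longrightarrow \Ul_{\Z}^{-1}T \]
are isomorphisms of Tambara functors, which is precisely the statement we want. Composing one with the inverse of the other (or invoking the universal property from Corollary \ref{CorLocUniv}) gives the remaining natural isomorphism $\Ul_{\Z}^{-1}T \cong \Ll_{\Z}^{-1}T$.

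There is essentially no obstacle here beyond the saturation check for $\Z$; all the substance has already been absorbed into Proposition \ref{PropTrivLoc} and Remark \ref{RemTrivLoc}. The only minor point worth recording explicitly in the write-up is that $\Z$ itself is saturated, so that the machinery of $\Ll_{\Z}$ and $\Ul_{\Z}$ from Section 3 is genuinely available.
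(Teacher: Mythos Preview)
Your proposal is correct and follows exactly the paper's own approach, which simply cites Remark \ref{RemTrivLoc} and Proposition \ref{PropTrivLoc}. The only addition is your explicit verification that $\Z$ is saturated and $G$-invariant, which the paper leaves implicit from Example \ref{ExZ}; this is a reasonable clarification but changes nothing substantive.
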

\begin{proof}
This follows from Remark \ref{RemTrivLoc} and Proposition \ref{PropTrivLoc}.
\end{proof}

In the following, we investigate when $\Ul_{\Z}^{-1}T$ becomes field-like.

\begin{rem}\label{RemLocMRC}
Let $T$ be a Tambara functor, and let $\Sl\subseteq T^{\mu}$ be a semi-Mackey subfunctor. Then the following are equivalent.
\begin{enumerate}
\item $\Sl^{-1}T$ satisfies {\rm (MRC)}.
\item For any transitive $X\in\Ob(\Gs)$ and any $x\in T(X)$ admitting some $s\in\Sl(G/e)$ satisfying $s\cdot\gamma_X^{\ast}(x)=0$ for $\gamma_X\in\Gs(G/e,X)$, there exists some $t\in\Sl(X)$ such that $tx=0$. 
\end{enumerate}
Especially, if $\Sl$ satisfies $\Sl(G/e)\subseteq \Z$, then these are also equivalent to $:$
\begin{enumerate}
\item[{\rm (2)$\ppr$}] For any transitive $X\in\Ob(\Gs)$ and any $x\in T(X)$ satisfying $\gamma_X^{\ast}(x)=0$ for $\gamma_X\in\Gs(G/e,X)$, there exists some $t\in\Sl(X)$ such that $tx=0$.
\end{enumerate}
(Condition {\rm (2)} and {\rm (2)$\ppr$} do not depend on the choice of $\gamma_X\in\Gs(G/e,X)$.)
\end{rem}

\begin{prop}\label{PropFinLocMRC}
Let $T$ be a Tambara functor, and let $\Sl\subseteq T^{\mu}$ be a semi-Mackey subfunctor satisfying $\Sl(G/e)\subseteq\Z$. 
Let $p\colon T\rightarrow T/\I_{(0)}=T_{\mathrm{MRC}}$ be the projection, and $\bar{\Sl}\subseteq T_{\mathrm{MRC}}$ be the image of $\Sl$ under $p$. Then we have the following.
\begin{enumerate}
\item $\Sl^{-1}T/\Sl^{-1}\I_{(0)}\cong\bar{\Sl}^{-1}T_{\mathrm{MRC}}$.
\item $\Sl^{-1}T$ satisfies {\rm (MRC)} if and only if the ideal $\Sl^{-1}\I_{(0)}\subseteq\Sl^{-1}T$ is equal to $(0)$.
\item If $\Sl^{-1}T$ satisfies {\rm (MRC)}, then $\Sl^{-1}T\cong\bar{\Sl}^{-1}T_{\mathrm{MRC}}$.
\end{enumerate}
\end{prop}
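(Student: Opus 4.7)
The plan is to deduce (1) directly from Proposition \ref{PropIS}, to read off (2) from Remark \ref{RemLocMRC}, and to obtain (3) as a formal combination of the two.

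For (1), I would first check the hypothesis $\I_{(0)}\cap\Sl=\emptyset$ of Proposition \ref{PropIS}. By Remark \ref{RemIS} this reduces to $\I_{(0)}(G/e)\cap\Sl(G/e)=\emptyset$, which holds because $\I_{(0)}(G/e)=(0)$ (from Example \ref{ExIM} with $\gamma=\id_{G/e}$) and $\Sl(G/e)\subseteq\Z$ excludes $0$. Proposition \ref{PropIS}(2) then directly gives $\Sl^{-1}T/\Sl^{-1}\I_{(0)}\cong\bar{\Sl}^{-1}T_{\mathrm{MRC}}$.

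For (2), I would reduce to transitive $X$ by orbit decomposition, since both $\Sl^{-1}\I_{(0)}$ and the (MRC) condition split as products over orbits. For transitive $X$, the $G$-invariance of $(0)$ combined with Example \ref{ExIM} shows that $x\in\I_{(0)}(X)$ is equivalent to $\gamma_X^{\ast}(x)=0$ for any $\gamma_X\in\Gs(G/e,X)$. The vanishing of a fraction $x/s$ in $\Sl^{-1}T(X)$ amounts to the existence of $t\in\Sl(X)$ with $tx=0$, which is exactly condition (2)$'$ of Remark \ref{RemLocMRC}, applicable because $\Sl(G/e)\subseteq\Z$. This yields the forward implication. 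For the converse, assuming $\Sl^{-1}\I_{(0)}=(0)$, part (1) identifies $\Sl^{-1}T$ with $\bar{\Sl}^{-1}T_{\mathrm{MRC}}$; since $T_{\mathrm{MRC}}$ satisfies (MRC) by Remark \ref{RemMRCQuot}, the restriction $\gamma_X^{\ast}$ on $T_{\mathrm{MRC}}$ is injective for transitive $X$, so the hypothesis of (2)$'$ on $T_{\mathrm{MRC}}$ forces $x=0$ and the conclusion becomes trivial. (One needs $\bar{\Sl}(G/e)\subseteq\Z$ in $T_{\mathrm{MRC}}(G/e)$ to invoke (2)$'$; this holds since $\I_{(0)}(G/e)=0$ makes $\bar{\Sl}(G/e)=\Sl(G/e)\subseteq\Z$.)

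Assertion (3) is then immediate: if $\Sl^{-1}T$ satisfies (MRC), then $\Sl^{-1}\I_{(0)}=(0)$ by (2), so by (1) we obtain $\Sl^{-1}T=\Sl^{-1}T/\Sl^{-1}\I_{(0)}\cong\bar{\Sl}^{-1}T_{\mathrm{MRC}}$. The only non-routine step is the matching in (2): one must correctly identify $\I_{(0)}(X)$ for transitive $X$ as the kernel of $\gamma_X^{\ast}$ before invoking Remark \ref{RemLocMRC}(2)$'$; everything else is a direct citation or orbit-wise reduction.
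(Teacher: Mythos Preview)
Your proposal is correct and follows essentially the same route as the paper: (1) via Proposition \ref{PropIS} after checking $\I_{(0)}\cap\Sl=\emptyset$, (2) via the identification of $\I_{(0)}(X)$ with $\ker\gamma_X^{\ast}$ for transitive $X$ and Remark \ref{RemLocMRC}(2)$'$, and (3) as a formal consequence. The only difference is that your separate converse argument for (2)---passing through (1) and checking (2)$'$ on $T_{\mathrm{MRC}}$---is unnecessary: the equivalence you already established between $\Sl^{-1}\I_{(0)}(X)=0$ and condition (2)$'$ is symmetric, so together with Remark \ref{RemLocMRC} it immediately gives both implications, which is how the paper handles it.
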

\begin{proof}
{\rm (1)} follows from Proposition \ref{PropIS}, since $\Sl$ satisfies $\I_{(0)}\cap \Sl=\emptyset$. {\rm (2)} follows from Remark \ref{RemLocMRC}. In fact, for any transitive $X\in\Ob(\Gs)$, the following are equivalent.
\begin{enumerate}
\item[{\rm (i)}] $\Sl^{-1}\I_{(0)}(X)=0$.
\item[{\rm (ii)}] For any $x\in\I_{(0)}(X)$, there exists $t\in \Sl(X)$ satisfying $tx=0$.
\item[{\rm (iii)}] For any $x\in T(X)$ satisfying $\gamma_X^{\ast}(x)=0$ for $\gamma_X\in\Gs(G/e,X)$, there exists $t\in \Sl(X)$ satisfying $tx=0$.
\end{enumerate}
{\rm (3)} follows from {\rm (1)} and {\rm (2)}.
\end{proof}

\begin{lem}\label{LemLocMRC}
Let $T$ be a Tambara functor. If $T$ satisfies one of the following conditions, then $\Ul_{\Z}^{-1}T$ satisfies {\rm (MRC)}.
\begin{enumerate}
\item[{\rm (i)}] $T$ satisfies {\rm (MRC)}.
\item[{\rm (ii)}] For any transitive $X\in\Ob(\Gs)$, if we let $\gamma_X\in\Gs(G/e,X)$ be a $G$-map, then
\[ (\gamma_X)_+(1)\in\Ul_{\Z}(X) \]
holds. $($Remark this does not depend on $\gamma_X$.$)$
\end{enumerate}
\end{lem}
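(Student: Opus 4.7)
The plan is to reduce the claim to criterion~{\rm (2)}$\ppr$ of Remark~\ref{RemLocMRC} applied with $\Sl=\Ul_{\Z}$. By the Corollary following Proposition~\ref{PropSMax} we have $\Ul_{\Z}(G/e)=\Z$, so the hypothesis $\Sl(G/e)\subseteq \Z$ of that remark is automatic. It therefore suffices to produce, for each transitive $X\in\Ob(\Gs)$ and each $x\in T(X)$ with $\gamma_X^{\ast}(x)=0$ (for some, equivalently any, $\gamma_X\in\Gs(G/e,X)$), an element $t\in\Ul_{\Z}(X)$ with $tx=0$.

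Under hypothesis~{\rm (i)}, the restriction $\gamma_X^{\ast}$ is injective by (MRC), so $\gamma_X^{\ast}(x)=0$ already forces $x=0$, and the choice $t=1\in\Ul_{\Z}(X)$ works tautologically. This case is thus immediate and requires no further work.

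Under hypothesis~{\rm (ii)}, the natural candidate is $t=(\gamma_X)_+(1)$, which lies in $\Ul_{\Z}(X)$ by assumption. The verification rests on the projection formula $(\gamma_X)_+(a)\cdot b=(\gamma_X)_+(a\cdot \gamma_X^{\ast}(b))$ for Tambara functors, specialized at $a=1$ and $b=x$:
\[ tx=(\gamma_X)_+(1)\cdot x=(\gamma_X)_+(\gamma_X^{\ast}(x))=(\gamma_X)_+(0)=0. \]
This closes the second case.

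No real obstacle is expected; the argument is essentially a direct application of the projection formula to criterion~{\rm (2)}$\ppr$ of Remark~\ref{RemLocMRC}. The only ancillary point that should be dispatched is the parenthetical remark that $(\gamma_X)_+(1)$ does not depend on the choice of $\gamma_X$: any two such maps differ by a $G$-automorphism $R_g$ of $G/e$, and $R_g$ being an isomorphism in $\Gs$ gives $(R_g)_+(1)=1$, whence $(\gamma_X\circ R_g)_+(1)=(\gamma_X)_+(1)$.
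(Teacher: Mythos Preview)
Your proof is correct and follows exactly the same approach as the paper: both invoke criterion~{\rm (2)}$\ppr$ of Remark~\ref{RemLocMRC}, dispatch case~{\rm (i)} by injectivity of $\gamma_X^{\ast}$, and handle case~{\rm (ii)} via the projection formula $(\gamma_X)_+(1)\cdot x=(\gamma_X)_+(\gamma_X^{\ast}(x))$. Your version is slightly more explicit (spelling out $t=1$ in case~{\rm (i)} and justifying the independence of $(\gamma_X)_+(1)$ from the choice of $\gamma_X$), but the substance is identical.
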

\begin{proof}
We use the criterion of Remark \ref{RemLocMRC}.
\begin{enumerate}
\item[{\rm (i)}] This is obvious, since $\gamma_X^{\ast}(x)=0$ implies $x=0$.
\item[{\rm (ii)}] By the projection formula, $\gamma_X^{\ast}(x)=0$ implies
\[ \gamma_{X+}(1)\cdot x=\gamma_{X+}\gamma_X^{\ast}(x)=0. \]
\end{enumerate}
\end{proof}

As an immediate consequence of Lemma \ref{LemLocMRC}, we have:
\begin{prop}\label{PropLocField}
Let $T$ be a Tambara functor. If $T(G/e)$ is an integral domain and if $T$ satisfies one of the conditions {\rm (i)}, {\rm (ii)} in Lemma \ref{LemLocMRC}, then $\Ul_{\Z}^{-1}T$ becomes a field-like Tambara functor.
\end{prop}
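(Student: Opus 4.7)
The plan is to invoke Fact \ref{FactField}, which reduces being field-like (for a nonzero Tambara functor) to satisfying {\rm (MRC)} together with the absence of non-trivial $G$-invariant ideals in the section over $G/e$. So I would verify these two conditions for $\Ul_{\Z}^{-1}T$, after a brief check that $\Ul_{\Z}$ is even defined (i.e., that $\Z$ is a saturated $G$-invariant submonoid of $T(G/e)^{\mu}$).

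First I would note that $\Z \subseteq T(G/e)^{\mu}$ is $G$-invariant because the $G$-action on $T(G/e)$ is by ring automorphisms, which preserve the property of being a non-zero-divisor; and $\Z$ is saturated because if $ax = s$ with $s$ not a zero divisor, then any $y$ with $xy = 0$ forces $sy = 0$, hence $y = 0$. So the constructions $\Ll_{\Z}, \Ul_{\Z}$ of Section 3 apply, and by the corollary following Proposition \ref{PropSMax} we have $\Ul_{\Z}(G/e) = \Z$. Since $T(G/e)$ is assumed to be an integral domain, $\Z = T(G/e) \setminus \{0\}$, so
\[ (\Ul_{\Z}^{-1}T)(G/e) \;=\; \Z^{-1}T(G/e) \;=\; \mathrm{Frac}(T(G/e)), \]
a field. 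In particular $\Ul_{\Z}^{-1}T \ne 0$, and $(\Ul_{\Z}^{-1}T)(G/e)$ has no non-trivial ideals at all, hence no non-trivial $G$-invariant ideals.

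Second, since $T$ satisfies hypothesis {\rm (i)} or {\rm (ii)} of Lemma \ref{LemLocMRC}, that lemma gives directly that $\Ul_{\Z}^{-1}T$ satisfies {\rm (MRC)}. Combining this with the previous paragraph and Fact \ref{FactField}, $\Ul_{\Z}^{-1}T$ is field-like.

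There is really no serious obstacle here; the proposition is essentially a bookkeeping corollary of Lemma \ref{LemLocMRC}, the identity $\Ul_{\Z}(G/e) = \Z$, and Fact \ref{FactField}. The only point requiring (very minor) care is confirming the saturatedness and $G$-invariance of $\Z$, and identifying $(\Ul_{\Z}^{-1}T)(G/e)$ with the ordinary field of fractions of $T(G/e)$ via the objectwise construction of Proposition \ref{PropTamLoc}.
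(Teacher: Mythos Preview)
Your proof is correct and follows essentially the same route as the paper: invoke Lemma \ref{LemLocMRC} for {\rm (MRC)}, observe that $(\Ul_{\Z}^{-1}T)(G/e)$ is a field, and conclude via Fact \ref{FactField}. The paper's version is simply terser, omitting the verifications that $\Z$ is saturated and $G$-invariant and that $\Ul_{\Z}(G/e)=\Z$ (which it treats as already established in Example \ref{ExZ} and the corollary after Proposition \ref{PropSMax}).
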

\begin{proof}
By Lemma \ref{LemLocMRC}, $\Ul_{\Z}^{-1}T$ satisfies {\rm (MRC)}. Since $(\Ul_{\Z}^{-1}T)(G/e)$ is a field, $\Ul_{\Z}^{-1}T$ becomes field-like by Fact \ref{FactField}.
\end{proof}

\begin{ex}\label{ExLocMRC}
$\ \ $
\begin{enumerate}
\item For any $G$-ring $R$, the fixed point functor $\mathcal{P}_R$ satisfies condition {\rm (i)} in Lemma \ref{LemLocMRC}. Especially if $R$ is an integral domain, then $\Ul_{\Z}^{-1}\mathcal{P}_R$ becomes a field-like Tambara functor by Proposition \ref{PropLocField}.
\item 
If $T(G/e)$ has no $|G|$-torsion, then Tambara functor $T$ satisfies condition {\rm (ii)} in Lemma \ref{LemLocMRC}.
\end{enumerate}
\end{ex}
\begin{proof}
{\rm (1)} follows immediately from the definition of $\mathcal{P}_R$. We show {\rm (2)}.
Let $X\in\Ob(\Gs)$ be transitive. We may assume $X=G/H$, for some $H\le G$. It suffices to show $(p^H_e)_+(1)\in\Ul_{\Z}(X)$.

By the existence of a pullback diagram
\[
\xy
(-10,8)*+{\underset{|G:H|}{\amalg}\, G/e}="0";
(10,8)*+{G/e}="2";
(-10,-8)*+{G/H}="4";
(10,-8)*+{G/G}="6";
(0,0)*+{\square}="7";
(15,-9)*+{,}="9";
{\ar^{{}^{\exists}\zeta} "0";"2"};
{\ar_{\underset{|G:H|}{\cup}p^H_e} "0";"4"};
{\ar^{p^G_e} "2";"6"};
{\ar_{p^G_H} "4";"6"};
\endxy
\]
we have
\begin{equation}
(p^G_H)^{\ast}(p^G_e)_+(1)=|G:H|\cdot(p^H_e)_+(1).
\label{EqForGamma1}
\end{equation}
In particular if $H=e$, we obtain
\begin{equation}
(p^G_e)^{\ast}(p^G_e)_+(1)=|G|\cdot 1.
\label{EqForGamma2}
\end{equation}

Since $T(G/e)$ has no $|G|$-torsion, we have $|G|\cdot 1\in\Z$, and thus $(\ref{EqForGamma2})$ implies
\[ (p^G_e)_+(1)\in((p^G_e)^{\ast})^{-1}(|G|\cdot 1)\subseteq ((p^G_e)^{\ast})^{-1}(\Z), \]
namely
\[ (p^G_e)_+(1)\in(\pt_{G/e}^{\ast})^{-1}(\Z). \]
From $(\ref{EqForGamma1})$, we obtain
\[ |G:H|\cdot(p^H_e)_+(1)=(p^G_H)^{\ast}(p^G_e)_+(1)=\pt_X^{\ast}((p^G_e)_+(1))\in\pt_X^{\ast}(\,(\pt_{G/e}^{\ast})^{-1}(\Z)\,), \]
and thus $\gamma_{X+}(1)\in\Ul_{\Z}(X)$.
\end{proof}

\begin{cor}\label{CorOmegaLocMRC}
Let $\Omega\in\Ob(\TamG)$ be the Burnside Tambara functor. 
Then $\Ul_{\Z}^{-1}\Omega$ 
becomes a field-like Tambara functor.
\end{cor}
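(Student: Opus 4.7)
The plan is to reduce the corollary to a direct application of Proposition \ref{PropLocField}, using Example \ref{ExLocMRC}(2). The two hypotheses required by Proposition \ref{PropLocField} are that $T(G/e)$ be an integral domain, and that $T$ satisfy one of the conditions {\rm (i)}, {\rm (ii)} of Lemma \ref{LemLocMRC}. Both hypotheses will be verified directly for $T = \Omega$.

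First, I would identify the ring $\Omega(G/e)$. By definition $\Omega(G/e) = K_0(\Gs/(G/e))$, and since any $G$-map $Y \to G/e$ forces $Y$ to be a free $G$-set entirely determined by its fiber over $e \in G/e$, the assignment $(Y \to G/e) \mapsto Y^e$ yields an equivalence of symmetric monoidal categories $\Gs/(G/e) \simeq \mathit{fSet}$. Taking Grothendieck rings, this gives a natural ring isomorphism $\Omega(G/e) \cong \mathbb{Z}$. In particular, $\Omega(G/e)$ is an integral domain.

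Next, since $\mathbb{Z}$ has no torsion at all, it certainly has no $|G|$-torsion. Hence by Example \ref{ExLocMRC}(2), the Tambara functor $\Omega$ satisfies condition {\rm (ii)} of Lemma \ref{LemLocMRC}, namely that $(\gamma_X)_+(1) \in \Ul_{\Z}(X)$ for each transitive $X \in \Ob(\Gs)$ and $\gamma_X \in \Gs(G/e, X)$.

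With both hypotheses in hand, Proposition \ref{PropLocField} immediately concludes that $\Ul_{\Z}^{-1}\Omega$ is a field-like Tambara functor. There is no real obstacle here; the only mild verification is the identification $\Omega(G/e) \cong \mathbb{Z}$, which is standard and could even be cited rather than proved, since $\Gs/(G/e)$ has no genuine $G$-action left in it.
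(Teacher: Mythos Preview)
Your proposal is correct and follows exactly the paper's approach: the paper's proof simply observes that $\Omega(G/e)$ is an integral domain with no $|G|$-torsion and then invokes Proposition \ref{PropLocField} together with Example \ref{ExLocMRC}. Your only addition is the explicit identification $\Omega(G/e)\cong\mathbb{Z}$, which the paper takes for granted (cf.\ Remark \ref{RemOmega} with $H=e$).
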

%
\begin{proof}
Since $\Omega(G/e)$ is an integral domain with no $|G|$-torsion, 
this immediately follows from Proposition \ref{PropLocField} and Example \ref{ExLocMRC}.
\end{proof}

\begin{caution}
In \cite{N_IdealTam}, we also considered an analogous notion of an integral domain, as a \lq {\it domain-like}' Tambara functor. In \cite{N_IdealTam}, a Tambara functor $T$ is called {\it domain-like} if the zero ideal $(0)\subseteq T$ is {\it prime}. Typical examples of domain-like Tambara functors are $T=\Omega$ and the fixed point functor $T=\mathcal{P}_R$ associated to an integral domain $R$ (with a $G$-action). For these Tambara functor $T$, the associated fraction $\Ul_{\Z}^{-1}T$ becomes field-like as shown in Example \ref{ExLocMRC} and Corollary \ref{CorOmegaLocMRC}. However in general, we will have to assume some more conditions on a domain-like Tambara functor, if we expect $\Ul_{\Z}^{-1}T$ to be field-like.
\end{caution}

\medskip

By using Proposition \ref{PropFinLocMRC}, we can calculate $\Ul_{\Z}^{-1}\Omega$.
First we remark the following.
\begin{rem}\label{RemOmega}
For each $H\le G$, let $\mathcal{O}(H)$ denote a set of representatives of conjugacy classes of subgroups of $H$. 

Then $\Omega(G/H)$ is a free module over
\[ \{ G/K={[}G/K\overset{p^H_K}{\rightarrow}G/H{]} \mid K\in\mathcal{O}(H) \}, \]
where $p^H_K\colon G/K\rightarrow G/H$ is the canonical projection.

Especially, for any transitive $X\cong G/H\in\Ob(\Gs)$, any $\alpha\in\Omega(X)$ can be decomposed uniquely as
\begin{equation}\label{Eqa}
\alpha=\sum_{K\in\mathcal{O}(H)} m_K\, {[}G/K\overset{p^H_K}{\rightarrow}G/H{]}\qquad(m_K\in\mathbb{Z}).
\end{equation}
\end{rem}

\begin{prop}\label{PropOmegaLoc}
We have an isomorphism of Tambara functors
\[ \Ul_{\Z}^{-1}\Omega \cong\mathcal{P}_{\mathbb{Q}}. \]
\end{prop}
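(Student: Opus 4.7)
The plan is to identify $\Ul_{\Z}^{-1}\Omega$ with $\mathcal{P}_{\mathbb{Q}}$ via the canonical morphism coming from the initiality of $\Omega$, deriving injectivity from (MRC) and surjectivity by producing explicit preimages of rationals.

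First I would observe that $\Omega(G/e)\cong\mathbb{Z}$ and $\Z=\mathbb{Z}\setminus\{0\}$, so ordinary localization gives $(\Ul_{\Z}^{-1}\Omega)(G/e)\cong\mathbb{Q}$. By Corollary \ref{CorOmegaLocMRC}, $\Ul_{\Z}^{-1}\Omega$ is field-like, hence satisfies (MRC) by Fact \ref{FactField}, and Fact \ref{FactMRC} then furnishes an embedding $\iota\colon \Ul_{\Z}^{-1}\Omega\hookrightarrow \mathcal{P}_{\mathbb{Q}}$ of Tambara functors. Concretely, since $\Omega$ is the initial Tambara functor there is a unique morphism $\Omega\rightarrow\mathcal{P}_{\mathbb{Q}}$ which, on each transitive $G/H$, sends an element of $\Omega(G/H)$ to its cardinality in $\mathbb{Z}\subseteq\mathbb{Q}\cong\mathcal{P}_{\mathbb{Q}}(G/H)$ (the $G$-action on $\mathbb{Q}$ being trivial). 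By Corollary \ref{CorLocUniv} this morphism factors through $\ell_{\Ul_{\Z}}$, and the induced map is exactly $\iota$, so $\iota(\frac{x}{s})=\frac{|x|}{|s|}$ where $|\cdot|$ denotes cardinality.

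For surjectivity I would use the additivity axiom (Definition \ref{DefSemiMackFtr}(2)) to reduce to transitive $X\cong G/H$, where $\mathcal{P}_{\mathbb{Q}}(G/H)\cong\mathbb{Q}$. Given $\frac{p}{q}\in\mathbb{Q}$ with $q\neq 0$, the element $q\cdot 1\in\Omega(G/H)$ equals $\pt_{G/H}^{\ast}(q\cdot 1_{\Omega(G/G)})$, and $q\cdot 1_{\Omega(G/G)}\in S_0=(\pt_{G/e}^{\ast})^{-1}(\Z)$ since its pullback to $\Omega(G/e)\cong\mathbb{Z}$ is $q\neq 0$. Hence $q\cdot 1\in\Ul_{\Z}(G/H)$, so $\frac{p\cdot 1}{q\cdot 1}\in\Ul_{\Z}^{-1}\Omega(G/H)$ makes sense and is sent by $\iota$ to $\frac{p}{q}$.

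The main step requiring care is the explicit identification of $\iota$ at each $G/H$ as cardinality followed by localization; this amounts to tracing the initial morphism $\Omega\rightarrow\mathcal{P}_{\mathbb{Q}}$ through the universal property of $\Ul_{\Z}^{-1}\Omega$ in Corollary \ref{CorLocUniv}. Once this identification is in hand, surjectivity is immediate from the presence of the integer constants $p\cdot 1,\, q\cdot 1$ in each $\Omega(G/H)$, and injectivity comes for free from Fact \ref{FactMRC}.
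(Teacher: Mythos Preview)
Your argument is correct and follows a genuinely different route from the paper's. The paper does not argue directly with the embedding of Fact~\ref{FactMRC}; instead it invokes Proposition~\ref{PropFinLocMRC} to pass to the quotient $\Omega_{\mathrm{MRC}}=\Omega/\I_{(0)}$, uses the previously established isomorphism $\wp\colon\Omega_{\mathrm{MRC}}\overset{\cong}{\rightarrow}\mathcal{P}_{\mathbb{Z}}$ (sending $[G/K\rightarrow G/H]$ to $|H:K|$), checks that the image of $\Ul_{\Z}$ under $\wp\circ p$ is exactly $\mathbb{Z}\setminus\{0\}$ on each $G/H$, and concludes by localizing $\mathcal{P}_{\mathbb{Z}}$ objectwise. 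Your approach trades this structural detour for a direct surjectivity computation: you embed $\Ul_{\Z}^{-1}\Omega\hookrightarrow\mathcal{P}_{\mathbb{Q}}$ via Fact~\ref{FactMRC}, identify the map using initiality of $\Omega$, and exhibit explicit preimages $\frac{p\cdot 1}{q\cdot 1}$. The paper's method has the virtue of recycling the identification $\Omega_{\mathrm{MRC}}\cong\mathcal{P}_{\mathbb{Z}}$ and making the computation of $\Ul_{\Z}$ almost tautological after transport; your method is more self-contained and avoids passing through the ideal quotient, at the mild cost of the phrase ``cardinality'' being slightly informal (what is meant is the fiber-degree map $[G/K\rightarrow G/H]\mapsto|H:K|$, i.e.\ restriction along $\gamma_X$).
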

\begin{proof}
As in Corollary \ref{CorOmegaLocMRC}, $\Ul_{\Z}^{-1}\Omega$ satisfies {\rm (MRC)}. Thus by Proposition \ref{PropFinLocMRC}, it suffices to show
\[ \bar{\Ul}_{\Z}^{-1}\Omega_{\mathrm{MRC}}\cong\mathcal{P}_{\mathbb{Q}}, \]
where $\bar{\Ul}_{\Z}\subseteq \Omega_{\mathrm{MRC}}$ is the image of $\Ul_{\Z}$ under the projection $\Omega\rightarrow\Omega_{\mathrm{MRC}}$.

As shown in \cite{N_IdealTam}, the family of ring isomorphisms $\{\wp_H\}_{H\le G}$
\begin{eqnarray*}
\wp_H\colon(\Omega/\I_{(0)})(G/H)&\rightarrow&\mathcal{P}_{\mathbb{Z}}(G/H)=\mathbb{Z}\\
\sum_{K\in\mathcal{O}(H)}m_K\, [G/K\overset{p^H_K}{\rightarrow}G/H]&\mapsto&\sum_{K\in\mathcal{O}(H)}m_K\, |H:K|
\end{eqnarray*}
gives an isomorphism of Tambara functors $\wp\colon\Omega/\I_{(0)}\overset{\cong}{\longrightarrow}\mathcal{P}_{\mathbb{Z}}$. 

Remark that, for $m\in\mathbb{Z}$, we have $\wp_H(m\, [G/H\overset{\id}{\rightarrow}G/H])=0$ if and only if $m=0$.
Additionally, since $m\, [G/H\overset{\id}{\rightarrow}G/H]\in\Ul_{\Z}(G/H)$ for any $0\ne m\in\mathbb{Z}$, we have
\[ \wp(\bar{\Ul}_{\Z})(G/H)=\mathbb{Z}\setminus\{ 0\} \]
for any $H\le G$. Thus it follows
\[ \bar{\Ul}_{\Z}^{-1}(\Omega/\I_{(0)})\cong\wp(\bar{\Ul}_{\Z})^{-1}\mathcal{P}_{\mathbb{Z}}\cong\mathcal{P}_{\mathbb{Q}}. \]
\end{proof}

\bigskip


\begin{thebibliography}{9}                                                      


\bibitem {Bouc}S. Bouc.: \emph{Green functors and $G$-sets}, Lecture Notes in Mathematics, 1671, Springer-Verlag, Berlin (1977).





\bibitem{H-R} Hartmann, R.; Yal\c{c}\i n, E.: \emph{Generalized Burnside rings and group cohomology}. J. of Alg. \textbf{310} (2007), no. 2, 917--944.

\bibitem{Jacobson} Jacobson, E.T.: \emph{The Brauer ring of a field}. Illinois J. Math. \textbf{30} (1986), 479--510.

18 (1976), 273-278.



\bibitem{N_TamMack} Nakaoka, H.: \emph{Tambarization of a Mackey functor and its application to the Witt-Burnside construction}. arXiv:1010.0812.

\bibitem{N_IdealTam} Nakaoka, H.: \emph{Ideals of Tambara functors}. arXiv:1101.5982.


\bibitem{O-Y3} Oda, F; Yoshida, T: \emph{Crossed Burnside rings III: The Dress construction for a Tambara functor}. J. of Alg. \textbf{327} (2011) 31--49.

\bibitem{Tam} Tambara, D.: \emph{On multiplicative transfer}. Comm. Algebra \textbf{21} (1993), no. 4, 1393--1420.


\bibitem{Yoshida} Yoshida, T.: \emph{Polynomial rings with coefficients in Tambara functors}. (Japanese) S\=urikaisekikenky\=usho K\=oky\=uroku No. 1466 (2006), 21--34.
\end{thebibliography}
\end{document}